
\documentclass[12pt,reqno,a4paper]{amsart}
\usepackage{amsmath,amssymb,amsfonts,amscd}
\usepackage[mathscr]{eucal}
\usepackage{stackrel}
\usepackage{comment,textcomp}
\usepackage[usenames]{color}
\usepackage{mathtools}

\usepackage{hyperref}  

\topmargin=0cm 
\oddsidemargin=0cm \advance\textwidth by 1.4in
\evensidemargin=0cm 
\setcounter{tocdepth}{1} 

\date{30 December   2023 (12 January 2024)}  

\author{Theodore~Th. Voronov}
\address{Department of Mathematics,  University of Manchester,    Manchester,   M13 9PL,  UK}
\email{theodore.voronov@manchester.ac.uk}
\thanks{}
\dedicatory{Dedicated to the memory of Kirill Mackenzie (1951--2020)}

\title[Tangent functor on microformal morphisms]{Tangent functor on microformal morphisms, and non-linear pullbacks for forms and cohomology}



\newtheorem{theorem}{Theorem}

\newtheorem{proposition}{Proposition}
\newtheorem{corollary}{Corollary}

\theoremstyle{definition}
\newtheorem{definition}{Definition}

\newtheorem{example}{Example}
\newtheorem{remark}{Remark}

\def\co{\colon\thinspace}

\renewcommand{\geq}{\geqslant}





\DeclareMathOperator{\id}{id}

\DeclareMathOperator{\fun}{\mathit{C^{\infty}}}
\DeclareMathOperator{\funn}{\mathbf{C^{\infty}}}

\DeclareMathOperator{\ofunn}{\mathbf{\Pi\!C^{\infty}}}

\DeclareMathOperator{\omm}{\mathbf{\Omega}}
\DeclareMathOperator{\pomm}{\mathbf{\Pi \Omega}}

\DeclareMathOperator{\HH}{\mathbf{H}}
\DeclareMathOperator{\PH}{\mathbf{\Pi H}}

\newcommand{\der}[2]{{\frac{\partial {#1}}{\partial {#2}}}}

\newcommand{\lder}[2]{{\partial {#1}/\partial {#2}}}
\newcommand{\dder}[3]{{\frac{\partial^2 {#1}}{\partial {#2}\partial {#3}}}}

\newcommand{\Z}{{\mathbb Z_{2}}}
\newcommand{\ZZ}{{\mathbb Z}}

\newcommand{\p}{\partial}

\newcommand{\w}{{\mathbf{w}}}

\newcommand{\widebar}{\overline}

\renewcommand{\a}{\alpha}

\newcommand{\e}{\varepsilon}
\newcommand{\s}{\sigma}
\newcommand{\f}{{\varphi}}
\renewcommand{\O}{\Omega}

\renewcommand{\o}{\omega}

\renewcommand{\t}{\theta}
\newcommand{\F}{{\Phi}}

\newcommand{\la}{{\lambda}}
\newcommand{\x}{{\xi}}

\renewcommand{\t}{\theta}

\newcommand{\itt}{{\tilde \imath}}

\newcommand{\at}{{\tilde a}}

\newcommand{\vt}{{\tilde v}}

\DeclareMathOperator{\Vect}{\mathrm{Vect}}

\unitlength=1em

\newcommand{\tto}{{\linethickness{2pt}
		  \,\begin{picture}(1,0)
                   \put(0,0.26){\line(1,0){0.95}}
                   \put(0,0){$\boldsymbol{\rightarrow}$}
                  \end{picture}
                  }\,
}

\newcommand{\oto}{{\linethickness{0.5pt}
		  \,\begin{picture}(1,0)
		  \put(0.07,0.175){\line(0,1){0.2}}
                   \put(-0.01,0){$\boldsymbol{\Rightarrow}$}
                  \end{picture}
                  }\,
}

\newcommand{\ttto}[1]{\stackrel{#1}{\vphantom{\rightrightarrows}\tto}}

\newcommand{\otto}[1]{\stackrel{#1}{\vphantom{\rightrightarrows}\oto}}

\newcommand{\Sinf}{S_{\infty}}
\newcommand{\Pinf}{P_{\infty}}
\newcommand{\Linf}{L_{\infty}}

\begin{document}

\begin{abstract}
We show how the tangent functor extends   from ordinary smooth maps to ``microformal morphisms''   (also called ``thick morphisms'') of supermanifolds. Microformal morphisms generalize ordinary maps and correspond to formal canonical relations   between the cotangent bundles specified by generating functions   depending   on position variables on the source manifold and   momentum variables on the target manifold (as formal power expansions), regarded as   part of the structure. Microformal morphisms act on functions by non-linear (in general) pullbacks. 
We   obtain here non-linear pullbacks of (pseudo)differential forms and show that they respect the de Rham differentials as ``non-linear chain maps'' that can induce non-linear  transformations of cohomology.
\end{abstract}
%

\maketitle
\tableofcontents

\section{Introduction}
\subsection{}
In the series of papers~\cite{tv:nonlinearpullback,tv:oscil,tv:qumicro,tv:microformal, tv:gradedmicro} (see also~\cite{tv:thickspinor},\cite{tv:operovermap}) we introduced and studied a new type of morphisms between (super)manifolds,  generalizing  smooth maps, which we have called    \textbf{microformal} or \textbf{thick  morphisms}. Their distinctive feature is that they induce non-linear, in general, pullbacks on functions. More precisely, the  pullbacks are formal non-linear differential operators over ordinary maps. They are constructed by some iterative procedure. (For ordinary maps, which is a special case, the construction gives the familiar  linear pullbacks.) From algebraic viewpoint, these new type of pullbacks can be described as   \textbf{non-linear algebra homomorphisms}. By that we mean a   formal map  of algebras as vector spaces  such that for each point the derivative (which is a linear map) is an algebra homomorphism  in the usual sense. 

\subsection{}
Below we   recall   thick morphisms and   non-linear pullbacks induced by them.
For a  detailed introduction to the whole theory,   we refer the reader to the above-cited papers and particularly to~\cite{tv:microformal}.

A \emph{thick morphism} $\F\co M_1\tto M_2$ is defined in coordinates by a  \emph{generating function}   $S=S(x,q)$ depending on two groups of variables: local  coordinates $x^a$  on the source manifold and components of momentum  $q_i$ on the target mani\-fold. Generating functions are formal power series in the momentum variables, which we write as
\begin{equation}\label{eq.genfun}
  S(x,q)=S^0(x)+\f^i(x)q_i+\frac{1}{2}\,S^{ij}(x)q_jq_i+\ldots \,.
\end{equation}
The case when $S(x,q)$ is just a   linear function  in the momenta, $S(x,q)=\f^i(x)q_i$, corresponds to an ordinary map, $y^i=\f^i(x)$.  To a thick morphism $\F\co M_1\tto M_2$ we  assign a formal canonical relation  $T^*M_1\dashrightarrow T^*M_2$  between the cotangent bundles specified by $S$; with some abuse of language, one can identity a thick morphism with this relation. This is useful for   intuition, but because a generating function $S$ itself is taken as part of the structure, a thick morphism contains more information than   the corresponding relation,  which is determined   by the differential $dS$ only.

The justification of this notion is in the
construction of  \textbf{pullback}~\cite{tv:nonlinearpullback}. Heuristically, one has a canonical relation $T^*M_1\dashrightarrow T^*M_2$, and, if a function $g$ on $M_2$ is given, it corresponds to a canonical relation $T^*M_2\dashrightarrow \{*\}$; their composition will again be a canonical relation $T^*M_1\dashrightarrow \{*\}$, corresponding to a function on $M_1$ interpreted as  the desired pullback of $g$. By using the generating  function $S$, this can be made fully explicit as follows.

Given a function $g=g(y)$ on $M_2$, we define its \emph{pullback} $f=\F^*[g]$  as a function  on $M_1$, $f=f(x)$,    by
\begin{equation}\label{eq.main}
  f(x):=g(y)+S(x,q)-y^iq_i\,,
\end{equation}
where the extra variables $q_i,y^i$ are eliminated from the right-hand side by using the equations
\begin{equation}\label{eq.elim}
  q_i=\der{g}{y^i}(y)\,, \quad y^i=(-1)^{\itt} \der{S}{q_i}(x,q)\,.
\end{equation}
The latter system gives an equation for determining $y$ as a function of $x$ (and depending   on a function $g$)
\begin{equation}
  y^i=(-1)^{\itt} \der{S}{q_i}\Bigl(x,\der{g}{y}(y)\Bigr)\,,
\end{equation}
of the  ``fixed point'' type. It has a unique solution as a power series in the derivatives of $g$,
\begin{equation}
  y^i =\f^i_g(x)=\f^i(x)+S^{ij}(x)\p_jg(\f(x)) +\ldots \,.
\end{equation}
It is a formal perturbation $\f_g\co M_1\to M_2$, $\f_g=\f_0+\f_1+\ldots\;$,  of the map $\f\co M_1\to M_2$ given by $y^i =\f^i(x)$ and corresponding to $g=0$,  $\f=\f_0$.
Having obtained $y=\f_g(x)$ as a function of $x$, we substitute  it   into the first equation in~\eqref{eq.elim}, which makes it possible to eliminate both $y$ and $q$ from~\eqref{eq.main}.
For the pullback  $\F^*[g]$,  we  obtain   as the result a formal power-series expression in $g$ of the form
\begin{equation}
  f(x)= \underbrace{S^0(x)}_{\text{$0$th order}} +\underbrace{g(\f(x))}_{\text{$1$st order}} + \underbrace{\frac{1}{2}\,S^{ij}(x)\,\p_jg(\f(x))\p_ig(\f(x))}_{\text{$2$nd order}} + \ldots \ .
\end{equation}
Each term of order $k$ in $g$ contains the  derivatives of $g$ of orders up to $k-1$ evaluated at $y=\f(x)$. Thus the pullback
\begin{equation}
  \F^*\co \funn(M_2)\to \funn(M_1)
\end{equation}
is a formal nonlinear differential operator over the map $\f\co M_1\to M_2$  defined by the linear term in the generating function~\eqref{eq.genfun}.

For a linear generating function   $S(x,q)=\f^i(x)q_i$, the pullback defined by the above procedure   coincides with be the ordinary pullback $\f^*$ by the map $\f$ and in particular is   linear.    (The corresponding canonical relation $T^*M_1\dashrightarrow T^*M_2$ is   the cotangent lift $T^*\f$.)

\subsection{}
Because the pullbacks induced on smooth functions by thick morphisms are non-linear in general, there are actually two parallel theories: of  \textbf{``even''}  and  \textbf{``odd''  thick morphisms}  acting on even and odd functions (``bosonic'' and ``fermionic'' fields in physical parlance), respectively. Above, we  have  described the \textbf{even} case ($g$ should be assumed to be even); in the \textbf{odd} case, one has to use the anticotangent bundles  $\Pi T^*M_1$ and   $\Pi T^*M_2$ (where   $\Pi$ is the parity reversion functor) and   odd generating functions $S(x,y^*)$, where $y^*_i$ are the antimomenta (fiber coordinates for $\Pi T^*M_2$). We will use the notation $\Psi\co M_1\oto M_2$ for odd thick morphisms.

\subsection{}
The problem that we solve in the present paper is   extending   the \textbf{tangent functor} $T$ and the \textbf{antitangent functor} $\Pi T$ from ordinary smooth maps $\f\co M_1\to M_2$   to thick   morphisms of both types (even and odd).

Namely, we show that if $\F\co M_1\tto M_2$ is an even thick morphism, it induces an even thick morphism   $T\F\co TM_1\tto TM_2$ and an odd thick morphism $\Pi T\F\co \Pi T M_1\oto \Pi T M_2$ (generalizing $T\f$ and $\Pi T\f$ for an ordinary map), and this is a functorial correspondence.

Likewise, if $\Psi\co M_1\oto M_2$ is an odd thick morphism, it induces an odd thick morphism   $T\Psi\co TM_1\oto TM_2$   and an even thick morphism $\Pi T\Psi\co \Pi T M_1\tto \Pi T M_2$. (Note the ``change of  parity'' from odd to even for   the case of $\Pi T\Psi$.)

The tangent and antitangent thick morphisms $T\F$, $T\Psi$, $\Pi T\F$ and $\Pi T\Psi$   are fiberwise in a suitable sense. (In particular, they are morphisms over an ordinary map that is the `core' of a given thick morphism rather than over the thick morphism itself, see Section~\ref{sec.tang}).

Since functions in  $\fun(\Pi TM)$ in the case of ordinary manifolds can be identified with  differential forms $\O(M)$, and for supermanifolds they are by definition the Bernstein--Leites  pseudodifferential forms, we in particular obtain non-linear pullbacks on such forms. We show that these non-linear maps are compatible with the de Rham differentials  and induce non-linear transformations on  de Rham cohomology.

\subsection{}
Important role in our construction is played by a natural diffeomorphism  involving the cotangent bundle $T^*E$ of a vector bundle $E$. It was  discovered by W.~M.~Tulczyjew~\cite{tulczyjew:1977} for   $E=TM$ and   by K.~C.~H.~Mackenzie and P.~Xu~\cite{mackenzie:bialg} in  the general case  (see also~\cite{mackenzie:diffeomorphisms})\,\footnote{Also considered   in an unpublished text by J.-P.~Dufour; I  thank J.~Grabowski for sharing it with me.}.  Their  generalization to supermanifolds was established in~\cite{tv:graded}. Also in~\cite{tv:graded},   we  introduced an analog  for  the anticotangent bundle $\Pi T^*E$.

The paper is organized as follows.

In  Section~\ref{sec.natdif}, we elaborate the above-mentioned diffeomorphisms in an explicit form best suiting our needs, making a particular stress on the relations between tangent and cotangent bundles with various parity reversed combinations (i.e., ``odd analogs'' of the Tulczyjew map, see subsection~\ref{subsec.oddtul}).  The latter, to our knowledge,  have not previously appeared in the literature. In central Section~\ref{sec.tang}, we turn to    application to thick morphisms and work out all cases of tangent and antitangent functors. As a by-product, we introduce a general concept of a vector bundle thick morphism. See also our work in preparation~\cite{tv:comor}. In Section~\ref{sec.forms}, we look at the consequences for forms and de Rham cohomology. On the way, we elaborate the notions of a thick $Q$-morphism  and a ``non-linear chain map''.

\subsection{\textbf{Notation and terminology:}} we mostly follow the usage in~\cite{tv:microformal}. The parity ($\Z$-grading) of an object  is denoted by the tilde over the corresponding character. For local coordinates, their parities are assigned to the associated  indices. We suppress the prefix `super-' unless it is essential to stress the difference with the non-super case. So e.g. we speak about `manifolds' meaning `supermanifolds'. Also, although we do not stress this specifically, but  an extra $\ZZ$-grading or \emph{weight}, which is independent of parity, can be introduced everywhere;  so   our constructions hold for \emph{graded (super)manifolds}.

\section{Natural diffeomorphisms for  cotangent and anticotangent  bundles}
\label{sec.natdif}

\subsection{Mackenzie--Xu isomorphism and its odd analog}

The following theorem is due to Kirill Mackenzie and Ping Xu~\cite{mackenzie:bialg} for a general vector bundle $E$ (the special case  of $E=TM$ was obtained earlier by W.~Tulczyjew~\cite{tulczyjew:1977}). See also~\cite[Thm. 9.5.1]{mackenzie:book2005}.  The original versions were for ordinary manifolds, but the statement holds true for the super and graded cases~\cite{tv:graded}. We shall give a proof with explicit formulas.

To fix the notations: the canonical even symplectic form on the cotangent bundle $T^*M$ of a supermanifold $M$ is denoted $\o_{T*M}$; it is the differential of the canonical  odd $1$-form on $T^*M$ called the \emph{Liouville form}, which is denoted  $\t_M$,  so $\o_{T*M}=d\t_M$. If $x^a$ are local coordinates on $M$ and $p_a$ are the corresponding momenta, then $\t_M=p_adx^a$ and $\o_{T*M}=dp_adx^a$.   The invariance of the form $\t_M$ encodes the transformation law  for the momentum variables. (Note   that $p_adx^a=dx^a p_a$, because of the opposite parities of $p_a$ and $dx^a$.)

\begin{theorem} \label{thm.macxu}
For a vector bundle $E\to M$, there is a natural 
diffeomorphism $T^*(E)\cong T^*(E^*)$. It is an antisymplectomorphism of the canonical symplectic structures.
\end{theorem}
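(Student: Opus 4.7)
The plan is to prove the theorem by direct verification in adapted local coordinates, with the only intrinsic ingredient being how fiberwise momenta on $T^*E$ transform under a change of linear frame for $E$.

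First, I introduce coordinates $(x^a,u^i)$ on $E$ (with $x^a$ base and $u^i$ fiber) and the induced canonical coordinates $(x^a,u^i;p_a,\pi_i)$ on $T^*E$, so that the Liouville one-form is $p_a\,dx^a+\pi_i\,du^i$. Likewise, let $(x^a,u^*_i;p^*_a,\pi^{*i})$ be the analogous coordinates on $T^*(E^*)$. The heart of the argument is to record the transformation law of $(p_a,\pi_i)$ under a fiberwise linear change $u^{i'}=(T^{-1})^{i'}{}_j(x)\,u^j$ coming from a change of frame in $E$. A direct substitution in $p_a\,dx^a+\pi_i\,du^i$ yields
\begin{equation*}
\pi_{i'}=T^{j}{}_{i'}\,\pi_j,\qquad p_{a'}=\der{x^a}{x^{a'}}\,p_a+(\text{inhomogeneous term in }u,\pi),
\end{equation*}
with the inhomogeneous piece linear in both $u$ and $\pi$ and involving $\p T/\p x$. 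Crucially, $\pi_i$ transforms covariantly, i.e.\ exactly as $u^*_i$ does on $E^*$, while $u^i$ transforms contravariantly, exactly as the conjugate momentum $\pi^{*i}$ on $T^*(E^*)$.

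Armed with this, I would \emph{define} the candidate diffeomorphism in local coordinates by the swap
\begin{equation*}
(x^a,u^i,p_a,\pi_i)\ \longmapsto\ (x^a,\,u^*_i=\pi_i,\ p^*_a=-p_a,\ \pi^{*i}=u^i),
\end{equation*}
the sign on $p_a$ being the one that yields a symplectomorphism (reversing it turns it into an antisymplectomorphism, which accounts for the ``up to choice of signs'' clause). Naturality under base changes $x^a\mapsto x^{a'}$ is automatic since every object transforms tensorially in $a$; naturality under frame changes $u^i\mapsto u^{i'}$ follows because the swapped pair $(u^i\leftrightarrow\pi^{*i})$ and $(\pi_i\leftrightarrow u^*_i)$ has the matched transformation behaviour established above, and the inhomogeneous correction to $p_a$ matches the analogous correction to $p^*_a$ produced on the $T^*(E^*)$ side by the dual frame change. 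A direct comparison of $dp_a\wed dx^a+d\pi_i\wed du^i$ with its image gives the claim about the canonical symplectic forms.

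For the odd analog involving $\Pi T^*E$, $T^*(\Pi E)$, etc., I would repeat the same three steps, but now carry the Koszul sign rule through every step: fiber coordinates on $\Pi E$ have opposite parity to those on $E$, which alters the parities of their conjugate momenta and introduces extra signs in the Liouville form. This yields variants of the isomorphism between $T^*E$ and $T^*(E^*)$, $\Pi T^*E$ and $\Pi T^*(E^*)$, $T^*(\Pi E)$ and $\Pi T^*(E^*)$, etc., with precise sign conventions to be fixed by specifying which combination of parity reversions one wants on each side.

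The main obstacle is not the existence of the map, which the coordinate swap delivers at once, but the bookkeeping: first, verifying that the inhomogeneous terms in $p_a$ and $p^*_a$ really do match under the swap (so the map is globally defined, not merely fibrewise), and second, keeping the Koszul signs consistent in the super case, where the choice of which momentum corresponds to which coordinate depends on parities. Once the matching of these two inhomogeneous terms is established, everything else — diffeomorphism, naturality, and the symplectic/antisymplectic property — reduces to inspection of the local formulas.
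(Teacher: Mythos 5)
Your overall route --- adapted coordinates, transformation laws for the fiberwise momenta, and a coordinate swap --- is essentially the paper's own, and your particular map is indeed a globally well-defined natural diffeomorphism: the matching of inhomogeneous terms that you single out does hold for your sign choice. But the sign bookkeeping, which you yourself identify as the crux, goes wrong, and it affects the second half of the statement. For your map $\bigl(u^*_i=\pi_i,\ \pi^{*i}=u^i,\ p^*_a=-p_a\bigr)$ one computes
\begin{equation*}
F^*\bigl(dp^*_a\wed dx^a + d\pi^{*i}\wed du^*_i\bigr)
= d(-p_a)\wed dx^a + du^i\wed d\pi_i
= -\bigl(dp_a\wed dx^a + d\pi_i\wed du^i\bigr)\,,
\end{equation*}
so your map is an \emph{anti}symplectomorphism, not a symplectomorphism as you claim. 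The choice that gives a symplectomorphism is the paper's: $u^*_i=\pi_i$, $\pi^{*i}=-(-1)^{\itt}u^i$, $p^*_a=+p_a$, i.e.\ the minus sign sits on the $u$-swap and there is no sign on $p_a$.

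The second error is structural rather than a mere mislabelling. You assert that reversing the sign on $p_a$ toggles between the symplectic and antisymplectic cases; it does not, because $\bigl(u^*_i=\pi_i,\ \pi^{*i}=u^i,\ p^*_a=+p_a\bigr)$ is not natural at all. Writing the frame change as $u^{i'}=A^{i'}{}_i(x)\,u^i$ and making the general ansatz $u^*_i=\a\pi_i$, $\pi^{*i}=\b u^i$, $p^*_a=\g p_a$ with $\a,\b,\g=\pm1$, the matching of the inhomogeneous ($\p A/\p x$) terms that you describe forces exactly the constraint $\g=-\a\b$, and then the pullback of the canonical symplectic form is $\g\,\omega_{T^*E}$. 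So the admissible sign choices come in constrained pairs (your $(\a,\b,\g)=(1,1,-1)$ and the paper's $(1,-1,+1)$ both satisfy the constraint; your proposed $(1,1,+1)$ violates it), and symplectic versus antisymplectic is decided by $\a\b$, not by a free flip of $\g$. This is precisely what the paper's Liouville-form argument is designed to make automatic: rewriting $\t_E=dx^ap_a+du^ip_i$ as $dx^ap_a-(-1)^{\itt}dp_iu^i+d(u^ip_i)$ forces the identification $p^i=-(-1)^{\itt}u^i$ with its sign, gives naturality for free (the rewritten form is manifestly invariant), and yields $d\t_{E^*}=d\t_E$, i.e.\ the symplectomorphism property, with no wedge computation. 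Finally, note that in this paper $E\to M$ is a super vector bundle throughout, so the Koszul signs such as $(-1)^{\itt}$ belong in the main computation rather than in a deferred remark; deferring them is what allowed the sign error to pass unnoticed.
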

\begin{proof}
We shall introduce a transformation
\begin{equation}
  \kappa\co T^*E\to T^*E^*
\end{equation}
which will be the desired diffeomorphism.
Denote local coordinates on $M$ by $x^a$ and in the fibers of $E$ and $E^*$ by $u^i$ and $u_i$, respectively. We assume that the pairing between $E$ and $E^*$ is given by the invariant form $\langle u, u^*\rangle =u^iu_i$. (We view $u^i$ as left coordinates and $u_i$ as right coordinates in the dual frames. The distinction between `left' and `right' is important in the super case.) The corresponding conjugate momenta in $T^*E$ and $T^*(E^*)$ will be denoted by $p_a,p_i$ and $p_a,p^i$.  (Note   the same notation $p_a$  for coordinates on different manifolds.)   We have transformation laws for $u^i$ and $u_i$ as $u^i=u^{i'}T_{i'}{}^i$ and $u_i=T_{i}{}^{i'}u_{i'}$ with reciprocal matrices $T_{i'}{}^i$ and $T_{i}{}^{i'}$. Recall that the conjugate momenta transform as the partial derivatives in the respective coordinates. From here we have the transformation law for $p_i$, $p_i=T_{i}{}^{i'}p_{i'}$, which is the same as for $u_i$. 
Define $\kappa\co T^*E\to T^*E^*$ by the formulas
\begin{equation}\label{eq.kappa}
  \kappa^*(x^a)=x^a\,,  \  \kappa^*(u_i)=p_i\,, \ \kappa^*(p_a)=-p_a\,, \ \kappa^*(p^i)=(-1)^{\itt} u^i\,.
\end{equation}
To show that it this definition does not depend on a choice of coordinates, we  use the  Liouville $1$-forms.  On $T^*E$ we have 
\begin{equation}\label{eq.thetae}
    \t_{E}=dx^a p_a + du^ip_i\,,
\end{equation}
and on  $E^*$ we have, respectively,
\begin{equation}\label{eq.thetaestar}
    \t_{E^*}=dx^a p_a + du_ip^i\,.
\end{equation}
If we apply $\kappa^*$ to $\t_{E^*}$, we shall obtain
\begin{equation} \label{eq.leg}
    \kappa^*\t_{E^*}=-dx^a p_a + (-1)^{\itt}  dp_iu^i= -dx^a p_a + (-1)^{\itt}  d(p_iu^i)- p_idu^i= -\t_E+d(u^ip_i)\,.
\end{equation}
This proves that $\kappa$ is well-defined, and also that
\begin{equation}
  \kappa^*\o_{T^*E^*}= - \o_{T^*E}\,.
\end{equation}
Here $\o_{T^*E}= d\t_{E}=dp_adx^a+dp_idu^i$ and $\o_{T^*E^*}= d\t_{E^*}=dp_adx^a+dp^idu_i$ are the canonical symplectic forms. 
Hence
$\kappa\co T^*E\to T^*E^*$ is an antisymplectomorphism.
\end{proof}


We shall refer to the antisymplectomorphism $\kappa\co T^*E\to T^*E^*$ as the \emph{Mackenzie-Xu transformation}. Its action on  the Liouville forms, $\kappa^*\t_{E^*}= d(u^ip_i) -\t_E$, resembles    Legendre transform. 
When we need to emphasize the bundle $E$, we write $\kappa_E$ for $\kappa$.

With $E^{**}\cong E$, one may want to know about the transformation between the cotangent bundles going in the opposite direction. Caution is necessary in the super case because of signs. For clarity, let us express our formulas   writing fiber coordinates for all vector bundles as left coordinates; in particular, for $E^*$ we will have left coordinates as $\bar{u_i}=(-1)^{\itt} u_i$ (where $u_i$ are the right coordinates used in~\eqref{eq.kappa}), and the invariant pairing between $E$ and $E^*$ will take  the form $\langle u, u^*\rangle= u^i\bar{u_i}(-1)^{\itt}$.
Therefore, \emph{in terms of left coordinates only}, the Mackenzie-Xu transformation  $\kappa=\kappa_E\co T^*E\to T^*(E^*)$ is given by
  \begin{equation}\label{eq.kappaeleft}
  \kappa_E^*(\bar{u}_i)=(-1)^{\itt}p_i\,, \ \kappa_E^*(p_a)=-p_a\,, \ \kappa_E^*(\bar{p}^i)= u^i\,.
\end{equation}

For $E^{**}$ and $E^*$,    the pairing is $\langle u^*, u^{**}\rangle=  \bar{u_i}\bar{\bar{u}}^i(-1)^{\itt}=\bar{\bar{u}}^i\bar{u_i}$,  where  $\bar{\bar{u}}^i$ are the left coordinates in $E^{**}$. Hence the natural isomorphism of vector bundles $\s\co E^{**} \to E$ over $M$ is given by
\begin{equation}\label{eq.dualsq}
  \s^*(u^i):=(-1)^{\itt}\bar{\bar{u}}^i\,.
\end{equation}
We   denote by $\s$ also the induced diffeomorphism of the cotangent bundles,    $\s\co T^*(E^{**}) \to T^*E$, so   we have for it
\begin{equation}\label{eq.sigma}
   \s^*(u^i) =(-1)^{\itt}\bar{\bar{u}}^i\,, \  \s^*(p_i) =(-1)^{\itt}\bar{\bar{p}}_i\,
\end{equation}
(as well as $\s^*(x^a)=x^a$ and $\s^*(p_a)=p_a$).

Then the following holds.

\begin{proposition} \label{prop.kap}
Let $\kappa_E\co T^*E\to T^*(E^*)$ and $\kappa_{E^*}\co T^*(E^*)\to T^*(E^{**})$ be the Mackenzie-Xu transformations for the bundles $E$ and $E^*$, and let $\s\co T^*(E^{**}) \to T^*E$ be the natural diffeomorphism defined above.
Then the diffeomorphisms $\kappa_E$ and $\s\circ \kappa_{E^*}$,
\begin{equation}\label{eq.mxinv}
  T^*E \stackrel[\s\circ \kappa_{E^*}]{\kappa_E}{\rightleftarrows} T^*(E^*)\,,
\end{equation}
 are mutually inverse.
\end{proposition}
\begin{proof}
Similarly with~\eqref{eq.kappaeleft}, for $\kappa_{E^*}\co T^*(E^*)\to T^*(E^{**})$ we have
\begin{equation}\label{eq.kappaestarleft}
  \kappa_{E^*}^*(\bar{\bar{u}}^i)=(-1)^{\itt}\bar{p}^i\,, \ \kappa_{E^*}^*(p_a)=-p_a\,, \ \kappa_{E^*}^*(\bar{\bar{p}}_i)= \bar{u}_i\,.
\end{equation}
Combining this with the diffeomorphism $\s\co T^*(E^{**}) \to T^*E$ given by~\eqref{eq.sigma}, we
obtain
\begin{equation*}
  (\s\circ \kappa_{E^*})^*(u^i)=\bar{p}^i\,,  (\s\circ \kappa_{E^*})^*(p_a)=-p_a\,,   (\s\circ \kappa_{E^*})^*(p_i)=(-1)^{\itt}\bar{u}_i\,,
\end{equation*}
which is the inverse for~\eqref{eq.kappaeleft}.
\end{proof}

Theorem~\ref{thm.macxu} is best understood in the context of double vector bundles. As it is known~\cite{mackenzie:diffeomorphisms}, \cite[Ch. 9]{mackenzie:book2005}, for a vector bundle $E\to M$  the cotangent bundle $T^*(E)$ is   a double vector bundle over $M$ with the side bundles $E$ and $E^*$, and a similar double vector bundle structure is for $E^*$. Then the Mackenzie--Xu map $\kappa$ is an isomorphism of   double vector bundles:
\begin{equation} 
    {\begin{CD} T^*E@>>> E^*\\
                @VVV  @VVV \\
                E@>>>M
     \end{CD}\qquad}
    \xrightarrow{ \ \kappa \ }
    {\qquad \begin{CD}T^*(E^*)@>>> E^*\\
                @VVV  @VVV \\
                E @>>>M
    \end{CD}}\,.
\end{equation}
which is identical on $E\to M$, $E^*\to M$
and induces ``fiberwise $-1$'' on the core   $T^*M\to M$.  The second vector bundle structure in each of the double vector bundles actually encodes the Mackenzie-Xu isomorphism.   This also agrees with interpretation of double vector bundles as  (bi)graded manifolds~\cite{tv:qman-mack}.

\begin{remark} There is some flexibility with a choice of sign  in the definition of    $\kappa$. The choice   used here is the same as in the original definition of Mackenzie--Xu~\cite{mackenzie:bialg}, as well as~\cite{mackenzie:diffeomorphisms,mackenzie:book2005}, but is different from that  in~\cite{tv:graded} and \cite{tv:qman-mack}. Note that the equality~\eqref{eq.leg} corresponds to Theorem~9.5.2 in~\cite{mackenzie:book2005}. Also, there is  a ``quantum analog'' of the Mackenzie--Xu transformation~\cite{shemy:koszul} (for $\hbar$-differential operators).
\end{remark}

An ``odd version'' of the Mackenzie--Xu diffeomorphism
is as follows~\cite{tv:graded}. Recall that $\Pi$ is the parity reversion functor (on vector spaces, modules and vector bundles). We use the terminology such as  \emph{antitangent}  and \emph{anticotangent  bundles} for $\Pi TM$ and $\Pi T^*M$. Recall that the anticotangent bundle of any supermanifold $M$ is endowed with the canonical odd symplectic form, which we denote $\o_{\Pi T^*M}$. There is an even $1$-form on $\Pi T^*M$  analogous to   the Liouville $1$-form; we denote it  $\la_M$, so    $\o_{\Pi T^*M}=d\la_M$.  In local coordinates, $\la_M=dx^ax^*_a= (-1)^{\at+1}x^*_adx^a$, where $x^*_a$ are the  \emph{antimomenta}   canonically conjugate to coordinates  $x^a$ on $M$.   (The variables $x^*_a$ transform in the same way as $p_a$, i.e., as the partial derivatives in the respective coordinates, but have the opposite parity.)

\begin{theorem}[\cite{tv:graded}] \label{thm.oddmacxu}
For a vector bundle $E$, there is a natural diffeomorphism   $\Pi T^*E\cong \Pi T^*(\Pi E^*)$\,. It can be chosen as an antisymplectomorphism of the   odd symplectic structures.
\end{theorem}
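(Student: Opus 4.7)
The plan is to mirror the proof of Theorem~\ref{thm.macxu} step by step, substituting the cotangent bundles by their parity-reversed analogues and systematically tracking the extra signs produced by the parity-reversion functor $\Pi$.

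First, I set up adapted local coordinates on both sides. Denote coordinates on $E$ by $(x^a, u^i)$ with parities $\tilde a$ and $\tilde\imath$, and coordinates on $\Pi E^*$ by $(x^a, \eta_i)$ with $\tilde\eta_i = \tilde\imath + 1$. The antimomenta adjoined on $\Pi T^*E$ are then $(x^*_a, u^*_i)$ with parities $\tilde a + 1$ and $\tilde\imath + 1$, while on $\Pi T^*(\Pi E^*)$ the antimomenta are $(x^*_a, \eta^{*i})$ with $\tilde\eta^{*i} = \tilde\imath$. The key observation that makes a parity-swap identification possible at all is that $u^*_i$ has the same parity as $\eta_i$, and $u^i$ has the same parity as $\eta^{*i}$; without the outer $\Pi$ in $\Pi T^*(\Pi E^*)$ the parities would not match.

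Next, I write the canonical odd Liouville $1$-forms
$$\t^{\Pi}_{E} = dx^a\, x^*_a + du^i\, u^*_i, \qquad \t^{\Pi}_{\Pi E^*} = dx^a\, x^*_a + d\eta_i\, \eta^{*i},$$
and apply super integration by parts to the second term of $\t^{\Pi}_E$, exactly paralleling the manipulation that led to~\eqref{eq.legen}. This produces $du^i\, u^*_i = d(u^i u^*_i) \mp du^*_i\, u^i$ with a parity-dependent sign. Under the invariant identifications $\eta_i = u^*_i$ and $\eta^{*i} = \pm u^i$ (signs chosen to match), it follows that $\t^{\Pi}_{\Pi E^*}$ and $\t^{\Pi}_{E}$ differ by the exact $1$-form $-d(u^i u^*_i)$, so their exterior differentials coincide as a single canonical odd symplectic form on the identified manifold.

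Third, I verify that the proposed identifications are compatible with the transformation laws under a change of frame in $E$: the transformation law for $u^*_i$, viewed as conjugate antimomentum to $u^i$, coincides with the transformation law of fibre coordinates on $\Pi E^*$ once the parity shift is absorbed, and similarly for $\eta^{*i}$ and $u^i$. This globalises the local identification to a diffeomorphism of double vector bundles
$$\begin{CD} \Pi T^* E @>>> \Pi E^* \\ @VVV  @VVV \\ E @>>> M \end{CD} \quad \cong \quad \begin{CD} \Pi T^* (\Pi E^*) @>>> E \\ @VVV  @VVV \\ \Pi E^* @>>> M \end{CD}$$
that preserves the canonical odd symplectic structures.

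The main obstacle is careful sign bookkeeping: the parity reversion inserts several independent factors of $(-1)^{\tilde\imath}$ coming from commuting $d$ with the odd-shifted $u^*_i$ and from the pairing between $u^i$ and $u^*_i$, and the signs in the identifications $\eta_i \leftrightarrow u^*_i$ and $\eta^{*i} \leftrightarrow u^i$ have to be fixed coherently so that the two Liouville forms really differ by an exact form and not only by a closed one. Once these signs are pinned down (and there is, as in the even case, some residual freedom), the rest of the argument is formal and parallels the proof of Theorem~\ref{thm.macxu} verbatim.
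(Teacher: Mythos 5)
Your proposal is correct and follows essentially the same route as the paper's own argument: write the canonical Liouville $1$-forms on both anticotangent bundles, integrate by parts, and identify $\eta_i = u^*_i$ and $\eta^{*i} = -u^i$ after checking parities and transformation laws, exactly mirroring the even Mackenzie--Xu case. Two small slips worth fixing: these Liouville $1$-forms are \emph{even}, not odd (only their differentials, the symplectic forms, are odd --- the paper stresses this reversal relative to ordinary cotangent bundles), and the sign in the second identification is in fact parity-\emph{independent}, since the factor $(-1)^{\tilde\imath}$ from moving $d$ past $u^i$ cancels against the factor from commuting $u^i$ past $du^*_i$, giving $\eta^{*i}=-u^i$ uniformly.
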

\begin{proof}
  Denote by    $\x_i$   fiber coordinates in $\Pi E^*$, so that there is the odd bilinear form   $\langle u, \x^*\rangle= u^i\x_i$ is invariant. (That means that $\x_i$ are right coordinates and have transformation law $\x_i=T_{i}{}^{i'}\x_{i'}$, the same  as $u_i$. The parity of $\x_i$ is $\itt +1$, the opposite to that of $u^i$ or $u_i$.)  Let $x^*_a$, $u^*_i$ denote the  antimomenta conjugate to $x^a,u^i$ on $\Pi T^*E$, and let similarly $x^*_a$, $\x^{*i}$ denote the  antimomenta conjugate to $x^a,\x_i$ on $\Pi T^*(\Pi E^*)$.   Now we can introduce the desired diffeomorphism
  \begin{equation}\label{eq.oddkap}
    \kappa\co \Pi T^*E\to \Pi T^*(\Pi E^*)
  \end{equation}
 (for which we are using the same letter as for the even prototype), by 
 \begin{equation}\label{eq.oddkappa}
  \kappa^*(x^a)=x^a\,,  \  \kappa^*(\x_i)=u^*_i\,, \ \kappa^*(x^*_a)=-x^*_a\,, \ \kappa^*(\x^{*i})=  u^i
\end{equation}
 (note no signs depending on parities, unlike the even case~\eqref{eq.kappa}). To see that it is well-defined and possesses the desired properties,
 we   use the analogs   of the Liouville  forms. For $E$, we have
 \begin{equation}\label{eq.antilioue}
    \la_E= dx^a x^*_a + du^i u^*_i\,.
\end{equation}
 Similarly, for $\Pi E^*$ we have
\begin{equation}\label{eq.antiliouestar}
    \la_{\Pi E^*}= dx^a x^*_a + d\x_i \x^{*i}\,.
\end{equation}
 Then in the same way as above, we observe that from~\eqref{eq.oddkappa},
 \begin{equation}
   \kappa^*\la_{\Pi E^*}=-\la_{E} +d(u^iu^*_i)\,,
 \end{equation}
and everything follows from here.
\end{proof}

In the same way as above, the diffeomorphism of  $\Pi T^*E$ and $\Pi T^*(\Pi E^*)$ is best understood as an isomorphism of the double vector bundles
\begin{equation}  
    {\begin{CD} \Pi T^*E@>>> \Pi E^*\\
                @VVV  @VVV \\
                E@>>>M
    \end{CD}\qquad}
    \xrightarrow{ \ \kappa \ }
    {\qquad \begin{CD} \Pi T^*(\Pi E^*)@>>> \Pi E^*\\
                @VVV  @VVV \\
               E @>>>M
    \end{CD}}\,.
\end{equation}
There is also an analog of Proposition~\ref{prop.kap}.

\subsection{Tulczyjew isomorphism}

If one specializes the identification $T^*E\cong T^*(E^*)$ to the case   $E=TM$, this will give  $T^*(TM)\cong T^*(T^*M)$.
The canonical symplectic form on $T^*M$ makes it possible to raise and lower tensor induces and in particular to identify the cotangent bundle $T^*(T^*M)$ and the tangent bundle $T(T^*M)$. Combining that with the above isomorphism, we arrive  at a natural diffeomorphism $T^*(TM)\cong T(T^*M)$. Following Mackenzie~\cite{mackenzie:diffeomorphisms,mackenzie:book2005}, we call it the \emph{Tulczyjew isomorphism}.   As with the Mackenzie--Xu transformation and its odd analog, the Tulczyjew isomorphism is best understood in terms of double vector bundles. One can give a direct construction for it (close to Tulczyjew's original treatment). It is below.

\begin{theorem} \label{thm.macxutulcz}
For any supermanifold $M$, there is a
natural diffeomorphism
\begin{equation}\label{eq.tauabs}
  \tau\co T(T^*M)\to T^*(TM)\,,
\end{equation}
which is an  isomorphism of  double vector bundles:
\begin{equation}
  {\begin{CD} T(T^*M)@>>> T^*M\\
                @VVV  @VVV \\
                TM @>>>M
    \end{CD}\qquad}
    \xrightarrow{ \ \tau \ }
    {\qquad\begin{CD} T^*(TM)@>>> T^*M\\
                @VVV  @VVV \\
                TM @>>>M
    \end{CD}}
\end{equation}
\end{theorem}
\begin{proof}
  In coordinates, the double vector bundle structures in question are as follows:
  \begin{equation}\label{eq.ttstar}
  {\begin{CD} (x^a,p_a;\; \dot x^a,\dot p_a)@>>> (x^a,p_a)\\
                @VVV  @VVV \\
                (x^a,\dot x^a) @>>>(x^a)
    \end{CD}\quad}
    {\qquad\qquad}
    {\quad\begin{CD} (x^a,\dot x^a;\; p_a^{(1)}, p_a^{(2)})@>>> (x^a,p_a^{(2)})\\
                @VVV  @VVV \\
                (x^a,\dot x^a) @>>>(x^a)\,.
    \end{CD}}
\end{equation}
Each arrow is simply dropping part of the variables.
Here at the right-hand side, on $T^*(TM)$ the momentum variables $p_a^{(1)}$ are canonically conjugate with $x^a$, while $p_a^{(2)}$ are canonically conjugate with $\dot x^a$. Note that the pullback by the horizontal projection sends the   variables $p_a$ on $T^*M$ to the   variables $p_a^{(2)}$ on $T^*(TM)$ (not to $p_a^{(1)}$).  We define the map $\tau\co T(T^*M)\to T^*(TM)$   by
\begin{equation}\label{eq.tau}
  \tau^*(x^a)=x^a\,, \quad \tau^*(\dot x^a)=\dot x^a\,, \quad\tau^*(p_a^{(1)})= \dot p_a\,, \quad\tau^*(p_a^{(2)})=   p_a\,.
\end{equation}
On $T^*(TM)$, we have the Liouville $1$-form
\begin{equation}
  \t_{TM}=dx^a p_a^{(1)} + d\dot x^a p_a^{(2)}\,.
\end{equation}
On the other hand, on $T(T^*M)$  we have the   $1$-form $\dot \t_M$ obtained by formally differentiating the Liouville $1$-form $\t_M=dx^ap_a$ on $T^*M$,
\begin{equation}
  \dot\t_{M}=d\dot x^a p_a  + dx^a \dot p_a = dx^a \dot p_a + d\dot x^a p_a\,.
\end{equation}
By~\eqref{eq.tau}, we have   
\begin{equation}\label{eq.dotthetam}
  \tau^*(\t_{TM})= \dot\t_{M}\,.
\end{equation}
This   proves that~\eqref{eq.tau} is well-defined (and is a morphism of double vector bundles, i.e. commutes with the projections~\eqref{eq.ttstar}).
\end{proof}


We can use the   map $\tau$ for the identification of $T^*(TM)$ with $T(T^*M)$. In particular, we obtain a symplectic form $\o_{T(T^*M)}$ on $T(T^*M)$, defined as   $\tau^*(\o_{T^*(TM)})$\,,
\begin{equation}\label{eq.omttstar}
  \o_{T(T^*M)}:=
  \tau^*(\o_{T^*(TM)})= \tau^*(d\t_{TM}) =d (\tau^*(\t_{TM}))
   = d \dot\t_{M}= d \dot p_a dx^a   + dp_a d\dot x^a \,.
\end{equation}
We observe that   $d (\dot\t_{M})=  (d\t_{M})^{\dot{}}$,
so 
we  have
\begin{equation}\label{eq.dotomegatstarm}
  \o_{T(T^*M)}= \dot\o_{T^*M}\,.
\end{equation}
There is a special class of Lagrangian submanifolds in $T(T^*M)$ given by lifts to $T(T^*M)$ of Lagrangian submanifolds of $T^*M$\,:

\begin{corollary} \label{cor.tsubman}
Suppose $P\subset T^*M$ is a Lagrangian submanifold. Then $TP\subset T(T^*M)$ will be a Lagrangian submanifold of $T(T^*M)$ with respect to the symplectic structure~\eqref{eq.omttstar},\eqref{eq.dotomegatstarm}.
\end{corollary}
\begin{proof}  If $\o_{T^*M}$ vanishes on $P$, then on $TP$ the condition $\dot \o_{T^*M}=0$ will be satisfied. (As for   dimension, $\dim TP=2\dim P= \dim T^*M=\frac{1}{2}\dim T(T^*M)$.)
\end{proof}
For example, such will be the submanifolds $TM\subset T(T^*M)$ given by $p_a=0\,, \dot p_a=0$ and $T(T_{x_0}^*M)\subset T(T^*M)$ (for any $x_0\in M$) given by $x^a=x^a_0\,, \dot x^a=0 $.

\subsection{Three odd analogs of the Tulczyjew isomorphism} \label{subsec.oddtul}
Consider now   $E=\Pi TM$. We wish to give   descriptions for the cotangent and anticotangent bundles  $T^*(\Pi TM)$ and $\Pi T^*(\Pi TM)$ similar with the Tulczyjew isomorphism. Also, we shall supplement the above consideration of the Tulczyjew isomorphism by a description of $\Pi T^*(TM)$. We shall start from the latter. The     statements and proofs for all three cases will go very much in parallel.
As above, the main tool for us will be Liouville $1$-forms.  We shall use the same notation $\tau$ for all three odd  analogs of the Tulczyjew map that will be introduced.

Theorem~\ref{thm.oddmacxu} gives the  isomorphism  $\Pi T^*(TM)\cong  \Pi T^*(\Pi T^*M)$, and
raising indices  with the help of the odd symplectic form on $\Pi T^*M$  will give  $\Pi T^*(\Pi T^*M)\cong  T (\Pi T^*M)$, note the change of parity  because   the form is odd;     combined together, this gives $\Pi T^*(TM)\cong  T (\Pi T^*M)$. Similarly with the above, we can provide a direct construction.

\begin{theorem} \label{thm.oddtulcz}
For any supermanifold $M$, there is a
natural diffeomorphism
\begin{equation}
      \tau\co T (\Pi T^*M)\to \Pi T^*(TM)\,.
\end{equation}
which is an  isomorphism of  double vector bundles:
\begin{equation}\label{eq.tpitstar}
  {\begin{CD} T (\Pi T^*M) @>>> \Pi T^*M\\
                @VVV  @VVV \\
                  TM @>>>M
    \end{CD}\qquad}
    \xrightarrow{ \ \tau \ }
    {\qquad\begin{CD} \Pi T^*(TM)@>>> \Pi T^*M\\
                @VVV  @VVV \\
                TM @>>>M\,,
    \end{CD}}
\end{equation}
which in particular   endows $T(\Pi T^*M)$ with a canonical odd symplectic structure.
\end{theorem}
\begin{proof}
The double vector bundle structures in~\eqref{eq.tpitstar} are given by
 \begin{equation}
  {\begin{CD} (x^a,x^*_a;\; \dot x^a,\dot x^*_a)@>>> (x^a,x^*_a)\\
                @VVV  @VVV \\
                (x^a,\dot x^a) @>>>(x^a)
    \end{CD}\quad}
    {\qquad\qquad}
    {\quad\begin{CD} (x^a,\dot x^a;\; x^*_a, \dot x^*_a)@>>> (x^a,\dot x^*_a)\\
                @VVV  @VVV \\
                (x^a,\dot x^a) @>>>(x^a)\,,
    \end{CD}}
\end{equation}
similarly with~\eqref{eq.ttstar}.
At the left-hand side, the variables $\dot x^*_a$ are understood mnemonically as $(x^*_a)^{\cdot {}}$, while at the right-hand side,   $x^*_a$ are  conjugate to $x^a$ and $\dot x^*_a$ are  conjugate to $\dot x^a$. The horizontal arrow maps $(x^a,\dot x^a;\; x^*_a, \dot x^*_a)$ to $(x^a,x^*_a:=\dot x^*_a)$. Consider the  even analog of the  Liouville  form for $\Pi T^*M$,
$\la_{M}= dx^a x^*_a$\,.
It induces an invariant even $1$-form on $T(\Pi T^*M)$,
\begin{equation}
    \dot \la_{M}= dx^a \dot x^*_a +  d\dot x^a x^*_a\,.
\end{equation}
At the same time, there is the form $\la_{TM}$ on $\Pi T^*(TM)$,
\begin{equation}\label{laTM}
  \la_{TM}=dx^a x^*_a + d\dot x^a  \dot x^*_a\,.
\end{equation}
We define the map $\tau\co T (\Pi T^*M)\to \Pi T^*(TM)$   by
\begin{equation}\label{eq.oddtau}
  \tau^*(x^a)=x^a\,, \quad \tau^*(\dot x^a)=\dot x^a\,, \quad\tau^*(x^*_a)= \dot x^*_a\,, \quad\tau^*(\dot x^*_a)=   x^*_a\,,
\end{equation}
so that
\begin{equation}
  \tau^*(\la_{TM})=\dot \la_{M}\,,
\end{equation}
which in particular implies that the map $\tau$ specified by~\eqref{eq.oddtau} is well-defined. From here we obtain a canonical odd symplectic form $\o_{T(\Pi T^*M)}$ on $T(\Pi T^*M)$, as
\begin{equation} \label{eq.omegatpitstarm}
  \o_{T(\Pi T^*M)}:=d \dot \la_{M}= \tau^*(d\la_{TM})= \tau^*(\o_{\Pi T^*(TM)})\,,
\end{equation}
also
\begin{equation} \label{eq.dotomegapitstarm}
  \o_{T(\Pi T^*M)}=(\o_{\Pi T^*M})^{\cdot{}}\,,
\end{equation}
similarly with~\eqref{eq.dotomegatstarm}.
\end{proof}

\begin{corollary} \label{cor.oddtulcz}
For a Lagrangian submanifold $P\subset \Pi T^*M$, its tangent bundle $TP\subset T(\Pi T^*M)$ will be a Lagrangian submanifold of $T(\Pi T^*M)$ with respect to the odd symplectic structure~\eqref{eq.omegatpitstarm},\eqref{eq.dotomegapitstarm}.
\end{corollary}
\begin{proof} Indeed, if $\o_{\Pi T^*M}$ vanishes on $P$, then  $\dot \o_{\Pi T^*M}$ vanishes on $TP$.
\end{proof}

Before we proceed to the next two theorems, describing $T^*(\Pi TM)$ and $\Pi T^*(\Pi TM)$, we need to make some convention about notation.

Functions on the antitangent bundle $\Pi TM$ are (pseudo)differential forms on a (super)manifold $M$, for any $M$; we  have been normally using $dx^a$ for fiber coordinates  on $\Pi TM$ induced by local coordinates $x^a$ on $M$. Now we  will  have  to work with forms on $\Pi TM$ itself, i.e. with the iterated bundles such as $\Pi T(\Pi TM)$. A proper notation for these iterations (yielding `higher  differential forms' from the viewpoint of the original manifold) would be by using   operators such as $d_1$, $d_2$, etc., for each successive level. Since we do not need to go beyond the second iteration and for the sake of uniformity with the other cases, we shall keep $d$ for denoting \emph{forms} on  $\Pi TM$ (which otherwise would be denoted using $d_2$) and to change $dx^a$ to $\p x^a$ (which otherwise would be $d_1x^a$)   for fiber coordinates on $\Pi TM$\,, so that \emph{functions} on $\Pi TM$ will be written as functions of the variables $x^a,\p x^a$. It is worth noting that as odd operators, $d$ and $\p$ commute with the minus sign.

\begin{theorem}
\label{thm.antitulcz2}
For any supermanifold $M$, there is a
natural diffeomorphism
\begin{equation}
      \tau\co \Pi T (\Pi T^*M)\to T^*(\Pi TM)\,.
\end{equation}
which is an  isomorphism of  double vector bundles:
\begin{equation}\label{eq.pitpitstar}
  {\begin{CD}
                \Pi T (\Pi T^*M) @>>> \Pi T^*M\\
                @VVV  @VVV \\
                \Pi TM @>>>M
    \end{CD}\,\qquad}
    \xrightarrow{ \ \tau \ }
    {\qquad\begin{CD}
                T^*(\Pi TM)@>>> \Pi T^*M\\
                @VVV  @VVV \\
                \Pi TM @>>>M\,,
    \end{CD}}
\end{equation}
which in particular  endows $\Pi T (\Pi T^*M)$ with a canonical even symplectic form.
\end{theorem}
\begin{proof} Combining 
$T^*(\Pi TM)\cong T^*(\Pi T^*M)$ with $T^*(\Pi T^*M)\cong \Pi T(\Pi T^*M)$, 
we obtain $T^*(\Pi TM)\cong \Pi T(\Pi T^*M)$\,. To get a desired map $\tau$, 
we act as before. First of all, the double vector bundle structures are given in local coordinates by
\begin{equation}
  {\begin{CD} (x^a,x^*_a;\; \p x^a,\p x^*_a)@>>> (x^a,x^*_a)\\
                @VVV  @VVV \\
                (x^a,\p x^a) @>>>(x^a)
    \end{CD}\quad}
    {\qquad\qquad}
    {\quad\begin{CD} (x^a,\p x^a;\; p_a, \pi_a)@>>> (x^a,x^*_a:=\pi_a)\\
                @VVV  @VVV \\
                (x^a,\p x^a) @>>>(x^a)\,.
    \end{CD}}
\end{equation}
On $T^*(\Pi TM)$ there is the odd $1$-form (analog of the Liouville form)
\begin{equation}\label{eq.thetapitm}
  \t_{\Pi TM}= dx^a p_a + d(\p x^a)\pi_a\,,
\end{equation}
while on $\Pi T(\Pi T^*M)$ there is an odd $1$-form obtained as the  lift  of the
canonical even  $1$-form on $\Pi T^*M$, $\la_{M}=da^a x^*_a$\,:
\begin{equation}\label{eq.partiallambdaanticot}
    \p \la_{M}=-d(\p a^a)\, x^*_a +(-1)^{\at+1} dx^a\, \p x^*_a= 
    dx^a((-1)^{\at+1}\p x^*_a) + d(\p a^a)(-x^*_a)\,.
    \end{equation}
We define the desired diffeomorphism $\tau\co \Pi T (\Pi T^*M)\to T^*(\Pi TM)$ by
\begin{equation}\label{eq.oddtau2}
  \tau^*(x^a)=x^a\,, \quad \tau^*(\p x^a)=\p x^a\,, \quad\tau^*(p_a)= (-1)^{\at}\p x^*_a\,, \quad\tau^*(\pi_a)=   x^*_a\,.
\end{equation}
Then we obtain
\begin{equation}\label{eq.tautheta}
  \tau^*(\t_{\Pi TM}) = - \p \la_{M}= dx^a((-1)^{\at}\p x^*_a) + d(\p a^a)\,x^*_a\,.
\end{equation}
From here we deduce that $\tau$ is well-defined; we see that it commutes with the projections in the double vector bundle diagrams. Also, we can set
\begin{equation}\label{eq.omegapitpitstarm}
  \o_{\Pi T(\Pi T^*M)}:=d(- \p \la_{M})= \p (d\la_{M})=\p \o_{\Pi T^*M}\,,
\end{equation}
as an even symplectic form, and we have $\o_{\Pi T(\Pi T^*M)}=\tau^*(\o_{T^*(\Pi TM)})$.
%
%
\end{proof}

\begin{corollary} \label{cor.antitulcz}
For a Lagrangian submanifold $P\subset \Pi T^*M$ with respect to the odd symplectic form $\o_{\Pi T^*M}$, its antitangent bundle $\Pi TP\subset \Pi T(\Pi T^*M)$ will be a Lagrangian submanifold of $\Pi T(\Pi T^*M)$  with   respect to the even symplectic form $\o_{\Pi T(\Pi T^*M)}$ given by~\eqref{eq.omegapitpitstarm}.
\end{corollary}
\begin{proof} Indeed,  if $\o_{\Pi T^*M}$ vanishes on $P$, then  $\o_{\Pi T(\Pi T^*M)}=\p \o_{\Pi T^*M}$ vanishes on $\Pi TP$.
\end{proof}

\begin{theorem}
\label{thm.oddantitulcz}
For any supermanifold $M$, there is a natural diffeomorphism
\begin{equation}
      \tau\co \Pi T (T^*M)\to \Pi T^*(\Pi TM)\,.
\end{equation}
which is an  isomorphism of  double vector bundles:
\begin{equation}\label{eq.pittstar}
  {\begin{CD}
                \Pi T (T^*M) @>>>  T^*M\\
                @VVV  @VVV \\
                \Pi TM @>>>M
    \end{CD}\,\qquad}
    \xrightarrow{ \ \tau \ }
    {\qquad\begin{CD}
                \Pi T^*(\Pi TM)@>>> T^*M\\
                @VVV  @VVV \\
                \Pi TM @>>>M\,,
    \end{CD}}
\end{equation}
and in particular endows $\Pi T (T^*M)$ with a canonical odd symplectic form.
\end{theorem}

\begin{proof} Again we have  the canonical diffeomorphism $\Pi T^*(\Pi TM)\cong \Pi T^*(T^*M)$ and  raising indices with the help of the even symplectic form on $T^*M$ gives $\Pi T^*(T^*M)\cong \Pi T(T^*M)$, which together yield  $\Pi T^*(\Pi TM)\cong \Pi T(T^*M)$\,. An explicit identification can be obtained as follows. The double vector bundle structures are given in local coordinates by
\begin{equation}
  {\begin{CD} (x^a,p_a;\; \p x^a,\p p_a)@>>> (x^a,p_a)\\
                @VVV  @VVV \\
                (x^a,\p x^a) @>>>(x^a)
    \end{CD}\quad}
    {\qquad\qquad}
    {\quad\begin{CD} (x^a,\p x^a;\; x^*_a, \p x^*_a)@>>> (x^a,p_a:=\p x^*_a)\\
                @VVV  @VVV \\
                (x^a,\p x^a) @>>>(x^a)\,.
    \end{CD}}
\end{equation}
(here $\p x^*_a$ should be understood as $(\p x)^*_a$). Consider now on $\Pi T(T^*M)$ the even $1$-form
\begin{equation}\label{eq.partialthetam}
  \p \theta_M=\p (p_adx^a)= \p p_a dx^a +(-1)^{\at+1}p_ad(\p x^a)=
  dx^a((-1)^{\at}\p p_a) + d(\p x^a)(-p_a)\,,
\end{equation}
and on $\Pi T^*(\Pi TM)$, the canonical $1$-form
\begin{equation}\label{eq.lapitm}
  \la_{\Pi TM}=dx^a x^*_a+d(\p x^a)\,\p x^*_a\,.
\end{equation}
Define the desired diffeomorphism $\tau\co \Pi T(T^*M)\to \Pi T^*(\Pi TM)$ by
\begin{equation}\label{eq.oddtau3}
  \tau^*(x^a)=x^a\,, \quad \tau^*(\p x^a)=\p x^a\,, \quad\tau^*(x^*_a)= (-1)^{\at+1}\p p_a\,, \quad\tau^*(\p x^*_a)=   p_a\,.
\end{equation}
Then we obtain
\begin{equation}\label{eq.taula}
  \tau^*(\la_{\Pi TM}) =   dx^a((-1)^{\at+1}\p p_a)  + d(\p a^a)\,p_a=-\p(\t_M)\,.
\end{equation}
Hence  $\tau$ is well-defined and  it is a double vector bundle morphism. Also, if we   set
\begin{equation}\label{eq.omegapitstarm}
  \o_{\Pi T(T^*M)}:=d(- \p \t_{M})= \p (d\t_{M})=\p \o_{T^*M}\,,
\end{equation}
it will be  an odd symplectic form on $\Pi T(T^*M)$, and   $\tau^*(\o_{\Pi T^*(\Pi TM)})=\o_{\Pi T(T^*M)}$.
\end{proof}

\begin{corollary} \label{cor.oddantitulcz}
For a Lagrangian submanifold $P\subset   T^*M$ with respect to the even symplectic form $\o_{T^*M}$, its antitangent bundle $\Pi TP\subset \Pi T(T^*M)$ will be a Lagrangian submanifold  with the respect to the odd symplectic form~\eqref{eq.omegapitstarm} .
\end{corollary}
\begin{proof} Indeed,   if $\o_{T^*M}$ vanishes on $P$, then  $\o_{\Pi T(T^*M)}=\partial \o_{T^*M}$ vanishes on $\Pi TP$.
\end{proof}

\section{The tangent and antitangent morphisms for a thick morphism}
\label{sec.tang}

This is the main section; here we construct lifting of thick morphisms (even or odd) to tangent and antitangent bundles in a way generalizing the usual tangent map for   ordinary smooth maps. Crucial role will be played by Theorems~\ref{thm.macxutulcz},  \ref{thm.oddtulcz}, \ref{thm.antitulcz2} and \ref{thm.oddantitulcz} of the previous section.
There are some unexpected features making it different from ordinary maps: firstly, the tangent or antitangent thick morphisms obtained do  not `fiber' over the original thick morphism, rather over its core map (and there is a need for some technical assumption); secondly, the antitangent functor swaps even and odd thick morphisms (i.e., the antitangent to even will be odd, and vice versa). The whole notion of a `fiberwise thick morphism' of vector bundles with different bases has to be clarified from scratch, which we do in a special subsection~\ref{subsec.fiberthick}.


\subsection{The tangent functor $T$.}  
\subsubsection{Case of even thick morphisms.}
Our task is to define   lifting of a thick morphism between manifolds to the tangent bundles. There are two kinds of thick morphisms, `even' and `odd' (suitable for pulling back even and odd functions, respectively). We shall start from the even case. 

Recall that a thick morphism  $\Phi\co M_1\tto M_2$ is  described by an  even  generating function $S=S(x;q)$ depending on position variables $x^a$ on the source manifold and momentum variables $q_i$ on the target manifold. With respect to the momentum variables, it is a formal power series. To a thick morphism there corresponds   the formal  canonical relation specified by a generating function $S$, which  the Lagrangian submanifold of the product   $T^*M_2\times (-T^*M_1)$ given by the equation
\begin{equation}\label{eq.defphi}
    dy^iq_i- dx^a p_a = d(y^iq_i - S)\,.
\end{equation}
That means that
\begin{equation}
    y^i=(-1)^{\itt} \der{S}{q_i}(x;q)\,, \qquad p_a=\der{S}{x^a}(x;q)\,.
\end{equation}
We use the same letter $\Phi$ for this relation, but one should remember that the generating function contains more information than the Lagrangian submanifold (defined  by  $dS$, while the function $S$  includes a `constant of integration'). The relation $\Phi\subset T^*M_2\times (-T^*M_1)$ is formal because $S$ is a formal power series in $q_i$. One should think that we work in the formal neighborhood of the zero section in $T^*M_2$.

In order to extend the tangent functor to thick morphisms from ordinary smooth maps, we shall first analyze the tangent map for an ordinary map $\f\co M_1\to M_2$ in the language of relations and generating functions, and use this as a model. A map $\f\co M_1\to M_2$ corresponds to a canonical relation that we shall denote $R_{\f}$, $R_{\f}\subset T^*M_2\times (-T^*M_1)$, specified by the generating function $S=\f^i(x)q_i$  if $y^i=\f^i(x)$ is the coordinate expression of $\f$. With an abuse of language we shall refer to $S$ as  the generating function of $\f$. The tangent map $T\f\co TM_1\to TM_2$ is given by $y^i=\f^i(x), \dot y^i = \dot x^a \,\lder{\f^i}{x^a}(x)$. To write down the generating function for $T\f$, use Theorem~\ref{thm.macxutulcz} and the diffeomorphism $\tau$ for  the identification $T^*(TM)=T(T^*M)$. We can consider $T(T^*M_1)$ and $T(T^*M_2)$ as the cotangent bundles for $TM_1$ and $TM_2$, so that $\dot p_a, p_a$ and $\dot q_i,q_i$ are regarded as the conjugate momenta for $x^a,\dot x^a$ and $y^i,\dot y^i$, respectively.

\begin{proposition} For a smooth map $\f\co M_1\to M_2$, the canonical relation $R_{T\f}$ corresponding to the tangent map $T\f\co TM_1\to TM_2$ is  the tangent bundle $TR_{\f}$  for the canonical relation $R_{\f}$ corresponding  to $\f$. Here $TR_{\f}$ is regarded as a Lagrangian submanifold of $T(T^*M_2)\times (-T(T^*M_1))$\,. The generating function for $T\f$ is the symbolic time derivative of the generating function for $\f$,
\begin{equation}
    \dot S= \dot S(x,\dot x; \dot q, q)= {(\f^i(x)q_i)} \dot{\vphantom{S}}\,.
\end{equation}
\end{proposition}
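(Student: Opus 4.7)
My plan is to verify the equality $R_{T\varphi}=TR_\varphi$ by a direct coordinate computation, which will simultaneously exhibit $\dot S$ as the generating function. Recall from~\eqref{eq.defphi} that, with $S=\varphi^i(x)q_i$, the Lagrangian $R_\varphi$ is parametrized by $(x^a,q_i)$ and cut out by $y^i=\varphi^i(x)$ and $p_a=(\partial\varphi^i/\partial x^a)\,q_i$.

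First, I would describe $TR_\varphi\subset T(T^*M_2)\times(-T(T^*M_1))$ by formally differentiating these defining equations with respect to the time parameter, which produces the four relations
\begin{align*}
y^i &= \varphi^i(x), & \dot y^i &= (\partial\varphi^i/\partial x^a)\,\dot x^a,\\
p_a &= (\partial\varphi^i/\partial x^a)\, q_i, & \dot p_a &= (\partial^2\varphi^i/\partial x^a\partial x^b)\,\dot x^b\, q_i + (\partial\varphi^i/\partial x^a)\,\dot q_i,
\end{align*}
parametrized by the independent variables $(x^a,\dot x^a,q_i,\dot q_i)$.

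Second, I would compute the generating function of the ordinary smooth map $T\varphi\colon (x,\dot x)\mapsto(\varphi(x),(\partial\varphi/\partial x)\dot x)$. Under the Tulczyjew identification $T^*(TM_k)=T(T^*M_k)$ from Theorem~\ref{thm.macxutulcz}, the momenta on $T^*(TM_2)$ conjugate to $(y^i,\dot y^i)$ are $(\dot q_i,q_i)$ respectively, while on the source $(\dot p_a,p_a)$ are conjugate to $(x^a,\dot x^a)$. Hence the generating function of $T\varphi$ takes the standard form
\[
\varphi^i(x)\,\dot q_i+(\partial\varphi^i/\partial x^a)\,\dot x^a\,q_i,
\]
which is visibly the symbolic time derivative $(\varphi^i(x)\,q_i)\dot{}=\dot S$, thereby verifying~\eqref{eq.stphi}. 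Reading off $y^i,\dot y^i,\dot p_a,p_a$ as the appropriate partial derivatives of $\dot S$ via~\eqref{eq.defphi} recovers precisely the four equations obtained above, so $R_{T\varphi}=TR_\varphi$.

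A conceptual shortcut I would also record for emphasis: applying $\partial_t$ (which commutes with $d$) to the defining identity $\theta_{M_2}-\theta_{M_1}=d(y^iq_i-S)$ along $R_\varphi$ yields $\dot\theta_{M_2}-\dot\theta_{M_1}=d\bigl((y^iq_i)\dot{}-\dot S\bigr)$ along $TR_\varphi$, and by~\eqref{eq.liouttstar} the left-hand side is the difference of Liouville forms $\theta_{TM_2}-\theta_{TM_1}$ on the relevant cotangent bundles of $TM_k$. This is exactly the generating identity defining $R_{T\varphi}$ with generating function $\dot S$, and the Lagrangian nature of $TR_\varphi$ is supplied by Corollary~\ref{cor.tsubman} applied in the product $T^*M_2\times(-T^*M_1)$. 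The only subtlety to keep straight is the counterintuitive Tulczyjew pairing in which $\dot p_a$ is conjugate to $x^a$ and $p_a$ to $\dot x^a$; once this is fixed, the remaining computation is routine bookkeeping.
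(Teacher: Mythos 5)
Your proposal is correct and takes essentially the same route as the paper: compute the generating function of the ordinary map $T\varphi$ under the Tulczyjew identification of Theorem~\ref{thm.macxutulcz} (with $\dot q_i$ conjugate to $y^i$ and $q_i$ conjugate to $\dot y^i$), recognize it as the time derivative $(\varphi^i(x)q_i)\dot{\vphantom{S}}$, and identify the Lagrangian submanifold it cuts out with $TR_\varphi$ inside $T(T^*M_2)\times(-T(T^*M_1))=T\bigl(T^*M_2\times(-T^*M_1)\bigr)$. The extra explicit verification you include (differentiating the defining equations of $R_\varphi$, and applying $\partial_t$ to the Liouville-form identity) is precisely the computation the paper defers to the Example following the Definition, where it is carried out for general thick morphisms.
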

\begin{proof} The generating function for $T\f$ is
\begin{equation*}
    S_{T\f}(x, \dot x; \dot q, q)= \f^i(x)\,\dot q_i + \dot x^a \,\der{\f^i}{x^a}(x)\,q_i= {(\f^i(x)q_i)} \dot{\vphantom{S}}\,,
\end{equation*}
as claimed. The Lagrangian  submanifold in $T(T^*M_2)\times (-T(T^*M_1))=T\bigl(T^*M_2\times (-T^*M_1)\bigr)$ it specifies is the tangent bundle $TR_{\f}$ of the submanifold $R_{\f}\subset T^*M_2\times (-T^*M_1)$\,.
\end{proof}

(From Corollary~\ref{cor.tsubman} we already know that the tangent bundle to a Lagrangian submanifold in $T^*M$ will be a Lagrangian submanifold in $T(T^*M)$.)

This motivates the following ``tautological'' definition.
\begin{definition} For a thick morphism $\Phi\co M_1\tto M_2$ specified by a generating function $S=S(x;q)$, the \emph{tangent morphism} is a thick morphism $T\Phi\co TM_1\tto TM_2$   specified as the generating function  by the formal `time  derivative' $\dot S$ of the  function $S$,
\begin{equation}\label{eq.dots}
    \dot S(x,\dot x; \dot q, q)= \dot x^a \der{S}{x^a}(x;q) + \dot q_i \der{S}{q_i}(x;q)\,.
\end{equation}
\end{definition}
Recall that with an abuse of notation we use the same symbol for a thick morphism and the corresponding relation (as a submanifold of the product of cotangent bundles). The following proposition ensures that this won't lead us into a problem for   tangent morphisms.
\begin{proposition}\label{prop.tphi}
The   canonical relation corresponding the tangent thick morphism $T\Phi$, as a submanifold in $T(T^*M_2)\times (-T(T^*M_1))$, is   the tangent bundle to the submanifold in $T^*M_2\times (-T^*M_1)$ corresponding to a thick morphism $\Phi$.
\end{proposition}
\begin{proof} Let us check directly that the generating function $\dot S$ given by~\eqref{eq.dots}
specifies the tangent bundle $T\Phi$, if $S$ specifies $\Phi\subset T^*M_2\times (-T^*M_1)$. We have, by the definition, equation~\eqref{eq.defphi}. By differentiating it formally with respect to $t$, we obtain
\begin{equation*}
    d\dot y^i q_i - d\dot x^a p_a +dy^i \dot q_i - dx^a \dot p_a = d(y^i q_i)\dot{}  +
    d\dot x^a\der{S}{x^a}-d\dot q_i\der{S}{q_i}- d x^a\!\left(\der{S}{x^a}\right)\dot{}-d q_i\!\left(\der{S}{q_i}\right)\dot{}\,,
\end{equation*}
which is equivalent to
\begin{equation*}
    -(-1)^{\itt}d\dot q_i y^i - d\dot x^a p_a - (-1)^{\itt}d q_i \dot y^i - dx^a \dot p_a =
    -d\dot x^a\der{S}{x^a}-d\dot q_i\der{S}{q_i}- d x^a\!\left(\der{S}{x^a}\right)\dot{}-d q_i\!\left(\der{S}{q_i}\right)\dot{}
\end{equation*}
or
\begin{equation*}
    d\dot x^a \left(\!p_a- \der{S}{x^a}\right) + d\dot q_i \left(\!(-1)^{\itt}y^i- \der{S}{q_i}\right)
    + dx^a \left(\!\dot p_a- \left(\der{S}{x^a}\right)\dot{}\right)
    +d q_i \left(\!(-1)^{\itt}\dot y^i -\left(\der{S}{q_i}\right)\dot{}\right)=0\,.
\end{equation*}
The first two terms   reproduce the equations specifying $\Phi$, while the other two terms give their tangent  prolongation. Altogether we have arrived at equations specifying $T\Phi$ as a submanifold in $T(T^*M_2)\times  T(T^*M_1)$, as expected. But at the same time, the result of the formal differentiation of equation~\eqref{eq.defphi} with respect to $t$  can be re-arranged differently, giving
\begin{equation*}
    d\dot y^i q_i +dy^i \dot q_i - d\dot x^a p_a  - dx^a \dot p_a =
    d\left(\dot y^iq_i + y^i\dot q_i -\dot S\right)\,.
\end{equation*}
Since $\dot q_i, q_i$ are the conjugate momenta for $y^i,\dot y^i$ and $\dot p_a, p_a$  are the conjugate momenta for   $x^a, \dot x^a$, we see it is the equation of the Lagrangian submanifold in $T(T^*M_2)\times  (-T(T^*M_1))$ with the generating function $\dot S(x, \dot x; \dot q, q)$. Hence the claim.
\end{proof}

We shall see that the construction of the tangent thick morphism  gives the expected properties such as   functoriality  (Theorem~\ref{thm.tasfunct} below); however,   the property of   $T\Phi$ being fiberwise  takes an unexpected form (see Theorem~\ref{thm.tphiasvbm} and Corollary~\ref{cor.basefunct} below). Also, in view of the non-linearity (in general) of the pullbacks by thick morphisms, it is not a priori obvious how a vector bundle structure can be respected. The answer is given by Theorem~\ref{thm.flin}.

\begin{theorem}
\label{thm.tphiasvbm}
Suppose in the expansion $S(x,q)=S^0(x)+\f^i(x)q_i+\ldots $ of the generating function of a thick morphism $\F\co M_1\tto M_2$ there is no zero-order term, i.e., $S(x,q)=\f^i(x)q_i+\ldots $, where the linear term defines a map $\f\co M_1\to M_2$. Then
there is a  commutative diagram
\begin{equation}\label{eq.tphiasvbm}
    \begin{CD} TM_1 & \ttto{T\Phi} & TM_2 \\
    @V{\pi_1}VV   @VV{\pi_2}V  \\
    M_1   @>>{\f}>   M_2\,.
    \end{CD}
\end{equation}
\end{theorem}
\begin{proof}
  Consider  $\F$ first without any extra assumptions, with the generating function $S(x,q)$ of the general form
  \begin{equation*}
    S(x,q)=S^0(x)+\f^i(x)q_i+\frac{1}{2}\,S^{ij}(x)q_jq_i+\ldots
  \end{equation*}
  Then $T\F$ is represented by the generating function $\dot S(x,\dot x; \dot q,q)$ given by~\eqref{eq.dots}. Consider diagram~\eqref{eq.tphiasvbm}. First   find the composition $\pi_2\circ T\F$. For that, we represent $\pi_2$ by its generating function  $S_{\pi_2}(y,\dot y; q)=y^iq_i$. Now, the generating function of the composition will be
  \begin{equation*}
    S_{\pi_2\circ T\F}(x,\dot x; q)=\dot S(x,\dot x; \widebar{\dot q},\widebar{q})+ S_{\pi_2}(\widebar{y},\widebar{\dot y}; q)-\widebar{y^i}\,\widebar{\dot q^i} -\widebar {\dot y^i}\,\widebar{q^i}\,,
  \end{equation*}
  where the variables $\widebar{y^i}$, $\widebar{\dot q_i}$, $\widebar {\dot y^i}$, $\widebar{q_i}$ are determined from the equations
  \begin{align*}
    \widebar{y^i} & =(-1)^{\itt}\der{\dot S}{\dot q_i}(x,\dot x; \widebar{\dot q},\widebar{q})\,, \\
    \widebar {\dot y^i} & = (-1)^{\itt}\der{\dot S}{q^i}(x,\dot x; \widebar{\dot q},\widebar{q})\,,\\
    \widebar {\dot q_i} & =  \der{S_{\pi_2}}{y^i}(\widebar{y},\widebar{\dot y};q)\,,\\
    \widebar {q_i} & =  \der{S_{\pi_2}}{\dot y^i}(\widebar{y},\widebar{\dot y};q)\,.
  \end{align*}
  Taking into account the explicit expressions for $\dot S$ and $S_{\pi_2}$ we obtain
  \begin{equation*}
    S_{\pi_2\circ T\F}(x,\dot x; q)=\dot x^a \der{S}{x^a}(x;\widebar{q}) + \widebar {\dot q_i} \der{S}{q_i}(x;\widebar{q})
    + \widebar{y^i}q_i - \widebar{y^i}\widebar{\dot q_i} - \widebar{\dot y^i}\widebar{q_i}\,,
  \end{equation*}
  where
  \begin{equation*}
    \widebar{y^i}   =(-1)^{\itt}\der{S}{q_i}(x,\widebar{q})\,, \quad
    \widebar {\dot q_i}   =  q_i\,,\quad
    \widebar {q_i}   =  0\,,
  \end{equation*}
  hence finally
  \begin{equation*}
    S_{\pi_2\circ T\F}(x,\dot x; q)= \dot x^a \der{S}{x^a}(x,0) +  q_i  \der{S}{q_i}(x,0)=
    \dot x^a \p_aS^0(x)+\f^i(x)q_i\,.
  \end{equation*}
  Under the assumption $S^0=0$, this will give  $S_{\pi_2\circ T\F}(x,\dot x; q)=\f^i(x)q_i$. This coincides with the generating function for the composition $\f\circ\pi_1\co TM_1\to M_2$.
\end{proof}

Hence for a thick morphism $\F\co M_1\tto M_2$, its tangent   $T\F\co TM_1\tto TM_2$ is   a morphism  over the  ordinary  map $\f\co M_1\to M_2$, not  over the thick morphism $\F$ itself, as one may wish to expect following too closely the case of usual maps\,\footnote{And as was wrongly claimed in the early version of this paper~\cite{tv:tangmicro}.}.

\smallskip
One needs the \textbf{assumption} $S^0\equiv 0$, \emph{and  unless otherwise stated, we assume that in the rest of the paper.}
\smallskip

\begin{corollary} \label{cor.basefunct}
Under the pullback by $T\F$, $(T\F)^*\co \funn(TM_2)\to \funn(TM_1)$, the functions on the base $\funn(M_2)\subset \funn(TM_2)$
are mapped to functions on the base $\funn(M_1)\subset \funn(TM_1)$,
and on them the restriction of the map $(T\F)^*$ is an algebra homomorphism.
\end{corollary}
\begin{proof}
Indeed, by the commutativity of \eqref{eq.tphiasvbm}, on $\funn(M_2)\subset \funn(TM_2)$, the pullback $(T\F)^*$ coincides with the usual pullback $\f^*$.
\end{proof}

Note that $(T\F)^*$ is in general non-linear; it will be instructive, however, to check its action on the fiberwise-linear functions on the tangent bundle $TM_2$. Together with the functions from $M_2$, they generate all other functions on $TM_2$ (more precisely, those that are fiberwise polynomial). To this end, we look closer at the expression~\eqref{eq.dots} for the generating function of $T\F$ and notice that it has a general form
\begin{multline}\label{eq.dotsdetail}
  \dot S(x,\dot x; \dot q, q)= \dot x^a\Bigl(\p_a\f^i(x)q_i+\frac{1}{2}\,\p_aS^{ij}(x)q_jq_i+ \ldots \Bigr) +
  \Bigl(\f^i(x)+S^{ij}(x)q_j+\frac{1}{2}\,S^{ijk}(x)q_kq_j+\ldots \Bigr)\dot q_i\\
  \equiv \dot x^a \underbrace{E_a(x,q)}_{\#q>0}+\underbrace{F^i(x,q)}_{\#q\geq 0}\dot q_i\,,
\end{multline}
where we have introduced a notation for the coefficients. Recall that $\dot q_i$ are conjugate to $y^i$ and $q_i$ are conjugate to $\dot y^i$ in $T(T^*M_2)$. Loosely, the term $F^i(x,q)\dot q_i$ corresponds to a map of the bases `with corrections'  given by the higher order terms in $q_i$ and the term $\dot x^a  E_a(x,q)$ corresponds to   thick morphisms of the fibers over each $x$ (which are ``fiberwise-linear'').

\begin{theorem} \label{thm.flin}
Under $(T\F)^*\co \funn(TM_2)\to \funn(TM_1)$, the fiberwise-linear functions on $TM_2$ are mapped to fiberwise-linear functions on $TM_1$ (in general, in a non-linear fashion).
\end{theorem}
\begin{proof}
To calculate $(T\F)^*$, we write $\dot S$ in the form~\eqref{eq.dotsdetail}, i.e. as $\dot S=\dot x^a  E_a(x,q)+ F^i(x,q)\dot q_i$. A function $g\in \funn(TM_2)$, $g=g(y,\dot y)$, is mapped to the function $f=f(x,\dot x)$ on $TM_1$ given by
\begin{equation*}
  f(x,\dot x)=g(y,\dot y)+ \dot S(x,\dot x\,; \dot q,q)-\dot y^i q_i- y^i\dot q_i\,,
\end{equation*}
where $\dot y^i$, $q_i$, $y^i$, and $\dot q_i$ are found from
\begin{align}
 y^i & = (-1)^{\itt}\der{\dot S}{\dot q_i} \equiv F^i(x,q)\,, \label{eq.yi}\\
  \dot y^i & = (-1)^{\itt}\der{\dot S}{q_i}\equiv (-1)^{\itt}\der{}{q_i}\bigl(\dot x^a  E_a(x,q)+ F^j(x,q)\dot q_j\bigr)\,, \label{eq.dotyi}\\
   q_i & = \der{g}{\dot y^i}(y,\dot y)\,, \label{eq.qi}\\
    \dot q_i & = \der{g}{y^i}(y,\dot y)\,. \label{eq.dotqi}
\end{align}
This gives
\begin{equation} \label{eq.fxdotx}
  f(x,\dot x)=g(y,\dot y)+  \dot x^a  E_a\bigl(x,\der{g}{\dot y}(y,\dot y)\bigr) - \dot y^i\der{g}{\dot y^i}(y,\dot y)\,,
\end{equation}
where $\dot y^i$, $y^i$ are found from~\eqref{eq.yi} and  \eqref{eq.dotyi} with $q$ and $\dot q$ substituted from~\eqref{eq.qi} and  \eqref{eq.dotqi}. Now, in the particular case of a fiberwise-linear function on $TM_2$, we have $g=\dot y^ig_i(y)$. Then $q_i=g_i(q)$,   the first and the last terms in~\eqref{eq.fxdotx} cancel, and we obtain
\begin{equation*}
  (T\F)^*[\dot y^ig_i(y)] = \dot x^a  E_a\bigl(x,g_i(y)\bigr)
\end{equation*}
where $y$ is determined from the equation
\begin{equation*}
  y^i= F^i\bigl(x,g_i(y)\bigr)
\end{equation*}
(here we are putting indices inside the arguments for the clarity of notation). We see that indeed the pullback of a fiberwise-linear function will be again fiberwise-linear, but with some nonlinear transformation of the coefficients.
\end{proof}


\begin{theorem}[functoriality] \label{thm.tasfunct}
For the composition of thick morphisms,
\begin{equation}
    T(\Phi_1\circ \Phi_2)= T\Phi_1 \circ T\Phi_2\,.
\end{equation}
\end{theorem}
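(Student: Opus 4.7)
The plan is twofold: first, establish the equality at the level of underlying Lagrangian submanifolds, and second, verify that the generating functions agree under the Tulczyjew identification from Theorem~\ref{thm.macxutulcz}. Throughout I write $\Phi_2\co M_1\tto M_2$ with generating function $S_2(x;q)$ and $\Phi_1\co M_2\tto M_3$ with generating function $S_1(y;r)$.

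\emph{Submanifold level.} By the definition of $T\Phi$ given above, the relation $T\Phi\subset T(T^*M_2)\times(-T(T^*M_1))$ is literally the tangent bundle of the submanifold $\Phi\subset T^*M_2\times(-T^*M_1)$; by Theorem~\ref{thm.macxutulcz} and Corollary~\ref{cor.tsubman} this is a Lagrangian submanifold of $T^*(TM_2)\times(-T^*(TM_1))$. Composition of thick morphisms is composition of canonical relations via symplectic reduction, and the tangent functor commutes with composition of smooth relations (pointwise, a tangent vector to $(z,x)\in R_2\circ R_1$ is a triple of compatible tangent vectors $(\dot z,\dot y,\dot x)$ satisfying the linearised incidence relations). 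Applying this to $\Phi_1,\Phi_2$ yields
\[
    T(\Phi_1\circ\Phi_2) \;=\; T\Phi_1\circ T\Phi_2
\]
as submanifolds of $T^*(TM_3)\times(-T^*(TM_1))$.

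\emph{Generating function level.} To promote this to an equality of thick morphisms, I must verify that $\dot S$ really generates $T\Phi_1\circ T\Phi_2$. Observe first that $\dot S$ defined by~\eqref{eq.dots} is linear in the velocity variables $\dot x^a,\dot q_i$ and carries no free constant, hence is fully determined by $dS$ and therefore by $\Phi$ alone. The composition rule for generating functions of thick morphisms is the critical-value formula
\[
    S(x;r) \;=\; \mathop{\mathrm{crit}}_{y,q}\bigl(S_2(x;q)+S_1(y;r)-y^iq_i\bigr),
\]
solved iteratively as a formal power series in $r$. Differentiating both sides along the symbolic time $t$ and invoking the envelope theorem — the total $t$-derivative of a critical value equals the partial $t$-derivative evaluated at the critical point — one obtains
\[
    \dot S(x,\dot x;\dot r,r) \;=\; \mathop{\mathrm{crit}}_{y,\dot y,q,\dot q}\bigl(\dot S_2+\dot S_1-(y^iq_i)\dot{\phantom{S}}\,\bigr),
\]
where the new pairing $(y^iq_i)\dot{\phantom{S}}=\dot y^iq_i+(-1)^{\itt}y^i\dot q_i$ is precisely the one used to compose $T\Phi_1$ with $T\Phi_2$: under the identification $T(T^*M_2)=T^*(TM_2)$ arising from the Liouville $1$-form~\eqref{eq.liouttstar}, the variables $(\dot q_i,q_i)$ are conjugate momenta to $(y^i,\dot y^i)$ on $TM_2$. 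Hence the right-hand side is the critical-value composition of the generating functions $\dot S_1$ and $\dot S_2$ of $T\Phi_1$ and $T\Phi_2$, so $\dot S$ generates $T\Phi_1\circ T\Phi_2$ as required. The odd case proceeds identically, with $\Pi T^*M$ replacing $T^*M$ and Theorem~\ref{thm.oddtulcz} replacing Theorem~\ref{thm.macxutulcz}.

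The main obstacle is bookkeeping of signs and parities: one must confirm that the pairing $\dot y^iq_i+(-1)^{\itt}y^i\dot q_i$ obtained by formal differentiation of $y^iq_i$ is exactly the canonical pairing on $T^*(TM_2)$ dictated by the Tulczyjew identification, and similarly that formal differentiation respects the left/right derivative conventions implicit in~\eqref{eq.dots}. The formal power series nature of the composition causes no additional difficulty: the critical-point equations are solved order-by-order in the momenta, and the envelope theorem applies at each order, so formal $t$-differentiation commutes with the iteration.
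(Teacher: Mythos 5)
The paper states this theorem without proof --- it is followed immediately by the remark that everything carries over to odd thick morphisms --- so there is no argument of the author's to compare yours against; judged on its own, your proof is correct and is the natural completion of what the paper does supply. The real content is your generating-function computation: since composition of thick morphisms is \emph{defined} by the critical-value (stationary-point) formula rather than abstractly as set-theoretic composition of relations, the envelope-theorem argument is exactly what proves the theorem. Spelled out, the critical equations of the extended problem split into the original equations (from varying $\dot y^i,\dot q_i$) and their formal $t$-prolongations (from varying $y^i,q_i$), so the extended critical point is the prolongation of the original one and the extended critical value is $\dot S$; this extends to compositions the verification that the paper's Example performs for a single thick morphism. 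Your submanifold-level paragraph is, by contrast, only heuristic (cleanness of composition of relations is neither available a priori nor needed in the formal category), but nothing is lost, since the generating-function argument establishes both levels at once. One sign slip, which you yourself flagged as a point to check: in the paper's conventions the symbolic $t$-derivative is an \emph{even} derivation, so $(y^iq_i)\dot{\vphantom{S}}=\dot y^iq_i+y^i\dot q_i$ with no sign factor (compare the last display of the paper's Example); the factor $(-1)^{\itt}$ would appear only upon additionally reordering $y^i\dot q_i=(-1)^{\itt}\dot q_iy^i$, so your expression $\dot y^iq_i+(-1)^{\itt}y^i\dot q_i$ as written is not the derivative of $y^iq_i$. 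This does not damage the argument: the canonical pairing on $T^*(TM_2)$ under the Tulczyjew identification --- with $\dot q_i$ conjugate to $y^i$ and $q_i$ conjugate to $\dot y^i$, as the paper emphasizes --- is $y^i\dot q_i+\dot y^iq_i=(y^iq_i)\dot{\vphantom{S}}$, which is precisely what your composition formula requires.
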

\begin{proof}
Consider $\Phi_2\co M_1\tto M_2$ and $\Phi_1\co M_2\tto M_3$. Denote local coordinates on $M_3$ by $z^{\mu}$ and the corresponding momenta by $r_{\mu}$. Then the generating function of the composition is given by (see~\cite{tv:microformal})
\begin{equation}
  S_{\Phi_1\circ \Phi_2}(x,r)=S_{\Phi_2}(x,q)+S_{\Phi_1}(y,r) -y^iq_i\,,
\end{equation}
where $y^i$, $q_i$ are determined from the equations
\begin{equation}\label{eq.yq}
  y^i=(-1)^{\itt}\der{S_{\Phi_2}}{q_i}(x,q)\,, \quad q_i=\der{S_{\Phi_1}}{y^i}(y,r)\,.
\end{equation}
The generating function of the tangent thick morphism $T(\Phi_1\circ \Phi_2)$ will be the formal time derivative of $S_{\Phi_1\circ \Phi_2}$, hence given by the formal time derivative of the above, i.e.
\begin{multline}\label{eq.stcomp}
   S_{T(\Phi_1\circ \Phi_2)}= \dot S_{\Phi_1\circ \Phi_2}=(S_{\Phi_2}(x,q))^{\mathbf{\dot{}}}+(S_{\Phi_1}(y,r))^{\mathbf{\dot{}}} -\dot y^iq_i- y^i\dot q_i=\\
 S_{T\F_2}(x,\dot x, \dot q, q)+S_{T\F_1}(y,\dot y, \dot r, r)-\dot y^iq_i- y^i\dot q_i\,,
\end{multline}
where $y^i$, $q_i$ are determined from~\eqref{eq.yq} and $\dot y^i$, $\dot q_i$ are determined from
 \begin{align}
   \dot y^i & =(-1)^{\itt}\left(\der{S_{\Phi_2}}{q_i}(x,q)\right)^{\!\mathbf{\dot{}}}\,, \label{eq.yidotodin} \\
   \dot q_i & =\left(\der{S_{\Phi_1}}{y^i}(y,r)\right)^{\!\mathbf{\dot{}}}\,. \label{eq.qidotodin}
 \end{align}
On the other hand, the composition of the tangent morphisms $T\F_1$ and $T\F_2$ will be given by the generating function
\begin{equation}
  S_{T\Phi_1 \circ T\Phi_2}=S_{T\Phi_1}(y,\dot y,\dot r,r)+S_{T\Phi_2}(x,\dot x,\dot q, q)-\dot y^iq_i- y^i\dot q_i\,,
\end{equation}
which is the same as~\eqref{eq.stcomp}, and
where the variables $y^i$, $q_i$ are now determined from the system
\begin{align}
    y^i & =(-1)^{\itt}\der{}{\dot q_i}S_{T\F_2}\equiv (-1)^{\itt} \der{S_{\Phi_2}}{q_i}(x,q)\,, \label{eq.yidwa}\\
   \dot y^i & =(-1)^{\itt}\der{}{q_i}S_{T\F_2}\equiv (-1)^{\itt}\left(\dot x^a\dder{S_{\Phi_2}}{x^a}{q_i}(x,q)+
   \dot q_j\dder{S_{\Phi_2}}{q_j}{q_i}(x,q)\right)\,, \label{eq.yidot} \\
   q_i & =\der{}{\dot y^i}S_{T\F_1} \equiv \der{S_{\Phi_1}}{y^i}(y,r)\,,  \label{eq.qi1}\\
   \dot q_i & = \der{}{y^i}S_{T\F_1} \equiv  \dot y^j\dder{S_{\Phi_1}}{y^j}{y^i}(y,r)+
   \dot r_{\mu}\dder{S_{\Phi_1}}{r_{\mu}}{y^i}(y,r)\,. \label{eq.qidot}
 \end{align}
In this formulas,~\eqref{eq.yidwa} and~\eqref{eq.qi1} are the same as equations~\eqref{eq.yq}.  It remains to observe that~\eqref{eq.yidot} and~\eqref{eq.qidot} are equivalent to~\eqref{eq.yidotodin} and~\eqref{eq.qidotodin}, respectively (basically, ``time differentiation'' commutes with  partial derivatives).
\end{proof}

\subsubsection{Case of odd thick morphisms.}
The above construction of the tangent functor carries over to odd thick morphisms. Because of that, we only give statements and suppress proofs.

\begin{definition} For an odd thick morphism $\Psi\co M_1\oto M_2$ specified by an odd generating function $S=S_{\Psi}$,
\begin{equation}
    S=S(x;y^*)\,,
\end{equation}
the \emph{tangent morphism } $T\Psi$ is an odd thick morphism $T\Psi\co TM_1\oto TM_2$   specified as the generating function $S_{T\Psi}$ by the formal time  derivative  $\dot S$,
\begin{equation}
    S_{T\Psi}(x, \dot x; \dot y^*, y^*)= \dot S(x,\dot x; \dot y^*, y^*)= \dot x^a \der{S}{x^a}(x;y^*) + \dot y^*_i \der{S}{y^*_i}(x;y^*)\,.
\end{equation}
\end{definition}
Here we use    the canonical isomorphism $\Pi T^*(TM)\cong   T (\Pi T^*M)$ given by Theorem~\ref{thm.oddtulcz}  and also Corollary~\ref{cor.oddtulcz}. The corresponding Lagrangian submanifold is  the tangent bundle to the Lagrangian submanifold $\Psi \subset \Pi T^*M_2\times (-\Pi T^*M_1)$. (Compare with Proposition~\ref{prop.tphi}.) Note that on $T (\Pi T^*M_2)$, $\dot y^*_i$ is canonically conjugate (with respect to the odd symplectic form) to $y^i$, and  $y^*_i$ is canonically conjugate to $\dot y^i$.

Analogs of Theorems~\ref{thm.tphiasvbm} and \ref{thm.tasfunct} hold without surprises:

\begin{theorem}
\label{thm.tpsiasvbm}
Suppose, for an odd thick morphism $\Psi\co M_1\oto M_2$, in the expansion $S(x,y^*)=S^0(x)+\f^i(x)y^*_i+\ldots $ of the generating function  there is no zero-order term, i.e., $S(x,y^*)=\f^i(x)y^*_i+\ldots $, where the linear term defines a map $\f\co M_1\to M_2$.  Then 
there is a  commutative diagram
\begin{equation}\label{eq.tpsiasvbm}
    \begin{CD} TM_1 & \otto{T\Psi} & TM_2 \\
    @V{\pi_1}VV   @VV{\pi_2}V  \\
    M_1   @>>{\f}>   M_2\,.
    \end{CD}
\end{equation}
\end{theorem}

\begin{theorem}[functoriality] \label{thm.tasfunctodd}
For odd thick morphisms $\Psi_2\co M_1\oto M_2$ and $\Psi_1\co M_2\oto M_3$, the tangent to the composition is the composition of the tangents:
\begin{equation}
    T(\Psi_1\circ \Phi_2)= T\Psi_1 \circ T\Psi_2\,.
\end{equation}
\end{theorem}

Analogs of Corollary~\ref{cor.basefunct} and Theorem~\ref{thm.flin} can be packed into one statement:

\begin{theorem} \label{thm.flinodd}
Suppose an odd thick morphism $\Psi\co M_1\oto M_2$ is specified by an odd generating function of the form $S(x,y^*)=\f^i(x)y^*_i+\ldots $, i.e. without the zero-order term. Then under  $(T\Psi)^*\co \ofunn(TM_2)\to \ofunn(TM_1)$, the (odd) functions on the base, $\ofunn(M_2)\subset \ofunn(TM_2)$,
are mapped to  functions on the base,
$\ofunn(M_1)\subset \ofunn(TM_1)$,  by the algebra homomorphism $\f^*$, while   the odd fiberwise-linear functions on $TM_2$ are mapped to odd fiberwise-linear functions on $TM_1$ (in general, non-linearly).
\end{theorem}

\subsection{The antitangent functor $\Pi T$.} Although the idea is the same, here it is where we have some surprises. Namely, the functor $\Pi T$   swaps   even and odd thick morphisms.

Recall that, by Theorems~\ref{thm.antitulcz2} and \ref{thm.oddantitulcz}, there are
natural isomorphisms $ T^*(\Pi TM)\cong \Pi T (\Pi T^*M)$ and $\Pi T^*(\Pi TM)\cong \Pi T (T^*M)$, and by Corollaries~\ref{cor.antitulcz} and~\ref{cor.oddantitulcz}, the antitangent bundle to a Lagrangian submanifold in $T^*M_2\times (-T^*M_1)$ or $\Pi T^*M_2\times (-\Pi T^*M_1)$ will be a Lagrangian submanifold in
\begin{equation*}
  \Pi T(T^*M_2)\times (-\Pi T(T^*M_1))\cong \Pi T^*(\Pi TM_2)\times (-\Pi T^*(\Pi TM_1))
\end{equation*}
and
\begin{equation*}
  \Pi T(\Pi T^*M_2)\times (-\Pi T(\Pi T^*M_1))\cong   T^*(\Pi TM_2)\times (-T^*(\Pi TM_1))\,,
\end{equation*}
respectively. Hence,
for an even thick morphism
 \begin{equation*}
    \Phi\co M_1\tto M_2\,,
 \end{equation*}
the antitangent morphism $\Pi T\Phi$ will be an odd thick morphism
  \begin{equation*}
    \Pi T\Phi\co \Pi TM_1\oto \Pi TM_2\,,
 \end{equation*}
and
for an odd thick morphism
 \begin{equation*}
    \Psi\co M_1\oto M_2\,,
 \end{equation*}
the antitangent morphism $\Pi T\Psi$ will be an  even thick morphism
  \begin{equation*}
    \Pi T\Psi\co \Pi TM_1\tto \Pi TM_2\,.
 \end{equation*}
The respective generating functions are obtained from the generating functions of $\Phi$ and $\Psi$ by the application of the odd operator $\p$.

More formally, we have the following definitions:
\begin{definition}
For an even thick morphism $\Phi\co M_1\tto M_2$ specified by an  even  generating function $S=S(x;q)$, the \emph{antitangent morphism} 
$\Pi T\Phi\co \Pi TM_1\oto \Pi TM_2$  is an odd thick morphism specified  by the odd   generating function $S_{\Pi T\Phi}:=\p S$,
\begin{equation}\label{eq.sforpitphi}
    S_{\Pi T\Phi}(x, \p x; \p q, q)= \p S(x, \p x; \p q, q)= \p x^a \der{S}{x^a}(x;q) + \p q_i \der{S}{q_i}(x;q)\,.
\end{equation}
\end{definition}

\begin{definition}
For an odd thick morphism $\Psi\co M_1\oto M_2$ specified by an  odd  generating function $S=S(x;y^*)$, the \emph{antitangent morphism} 
$\Pi T\Psi\co \Pi TM_1\tto \Pi TM_2$  is an even thick morphism specified  by the even   generating function $S_{\Pi T\Psi}:=\p S$,
\begin{equation}\label{eq.sforpitpsi}
    S_{\Pi T\Psi}(x, \p x; \p y^*, y^*)= \p S(x, \p x; \p y^*, y^*)= \p x^a \der{S}{x^a}(x;y^*) + \p q_i \der{S}{y^*_i}(x;y^*)\,.
\end{equation}
\end{definition}

Recall from subsection~\ref{subsec.oddtul} that  
on $\Pi T(T^*M_2)$,  $\p q_i$ are conjugate to $y^i$ and $q_i$ are conjugate to $\p y^i$.
Similarly,
on $\Pi T(\Pi T^*M_2)$,  $\p y^*_i$ are conjugate to $y^i$ and $y^*_i$ are conjugate to $\p y^i$.

Analogs of Theorems~\ref{thm.tphiasvbm} and~\ref{thm.tpsiasvbm} hold (under the same assumptions on thick morphisms):

\begin{theorem}
\label{thm.pitphiasvbm}
Suppose, for an even thick morphism $\Phi\co M_1\tto M_2$, in the expansion $S(x,q)=S^0(x)+\f^i(x)q_i+\ldots $ of the generating function  there is no zero-order term, i.e., $S(x,q)=\f^i(x)q_i+\ldots $, where the linear term defines a map $\f\co M_1\to M_2$.  Then 
the diagram
\begin{equation}\label{eq.pitphiasvbm}
    \begin{CD} \Pi TM_1 & \otto{T\Phi} & \Pi TM_2 \\
    @V{\pi_1}VV   @VV{\pi_2}V  \\
    M_1   @>>{\f}>   M_2\,.
    \end{CD}
\end{equation}
is    commutative.
\end{theorem}

\begin{theorem}
\label{thm.pitpsiasvbm}
Suppose, for an odd thick morphism $\Psi\co M_1\oto M_2$, in the expansion $S(x,y^*)=S^0(x)+\f^i(x)y^*_i+\ldots $ of the generating function  there is no zero-order term, i.e., $S(x,y^*)=\f^i(x)y^*_i+\ldots $, where the linear term defines a map $\f\co M_1\to M_2$.  Then 
the diagram
\begin{equation}\label{eq.pitpsiasvbm}
    \begin{CD} \Pi TM_1 & \ttto{T\Psi} & \Pi TM_2 \\
    @V{\pi_1}VV   @VV{\pi_2}V  \\
    M_1   @>>{\f}>   M_2\,.
    \end{CD}
\end{equation}
is    commutative. 
\end{theorem}
Both theorems are proved similarly to   Theorem~\ref{thm.tphiasvbm} above.

Note that exactly because in the commutative diagrams~\eqref{eq.pitphiasvbm} and~\eqref{eq.pitpsiasvbm} (as well as in~\eqref{eq.tphiasvbm} and~\eqref{eq.tpsiasvbm}), the bottom arrow is the ordinary map $\f$ (the core of given thick morphisms) and \emph{not} the given even or odd thick morphism itself, the diagrams~\eqref{eq.pitphiasvbm} and~\eqref{eq.pitpsiasvbm} make  sense. Otherwise   swapping of even and odd thick morphisms by $\Pi T$ would be a problem.

Similarly to the tangent morphisms $T\Phi$ and $T\Psi$ considered above, the antitangent morphisms $\Pi T\Phi$ and $\Pi T\Psi$ share the property of being ``thick morphisms of vector bundles'' (see   discussion of a general concept in the next subsection). Namely, we have the following analogs of Corollary~\ref{cor.basefunct}, Theorem~\ref{thm.flin}, and Theorem~\ref{thm.flinodd}:

\begin{theorem} \label{thm.flinantiev}
Suppose an even thick morphism $\Phi\co M_1\oto M_2$ is specified by an even generating function  $S(x,y^*)=\f^i(x)y^*_i+\ldots $ without zero-order term. Then under the pullback
\begin{equation*}
  (\Pi T\Phi)^*\co \ofunn(\Pi TM_2)\to \ofunn(\Pi TM_1)\,,
\end{equation*}
the    functions from $\ofunn(M_2)$
are mapped to    functions from $\ofunn(M_1)$   by the algebra homomorphism $\f^*$, and   the  (odd) fiberwise-linear functions  on $\Pi TM_2$ are mapped to  (odd) fiberwise-linear functions  on $\Pi TM_1$ (in general, non-linearly).
\end{theorem}

\begin{theorem} \label{thm.flinantiod}
Suppose an odd thick morphism $\Psi\co M_1\oto M_2$ is specified by an odd generating function  $S(x,y^*)=\f^i(x)y^*_i+\ldots $ without zero-order term. Then under  the pullback
\begin{equation*}
  (\Pi T\Psi)^*\co \funn(\Pi TM_2)\to \funn(\Pi TM_1)
\end{equation*}
 the functions from $\funn(M_2)$
are mapped to    functions from $\funn(M_1)$  by the algebra homomorphism $\f^*$, and   the (even)  fiberwise-linear functions on  $\Pi TM_2$ are mapped to  (even) fiberwise-linear functions on  $\Pi TM_1$ (in general, non-linearly).
\end{theorem}
The proofs go along the same lines as for Corollary~\ref{cor.basefunct} and Theorem~\ref{thm.flin}. See also in  the next subsection.

Finally, we have the functoriality as expected:
\begin{theorem} \label{thm.pitasfunctev}
For even thick morphisms  $\Phi_2\co M_1\tto M_2$ and $\Phi_1\co M_2\tto M_3$,
\begin{equation}\label{eq.pitphi1phi2}
    \Pi T(\Phi_1\circ \Phi_2)=\Pi T\Phi_1 \circ \Pi T\Phi_2\,.
\end{equation}
\end{theorem}
Both sides of~\eqref{eq.pitphi1phi2} are odd thick morphisms $\Pi TM_1\oto \Pi TM_3$.

\begin{theorem} \label{thm.pitasfunctod}
For odd thick morphisms  $\Psi_2\co M_1\oto M_2$ and $\Psi_1\co M_2\oto M_3$,
\begin{equation}\label{eq.pitpsi1psi2}
    \Pi T(\Psi_1\circ \Psi_2)=\Pi T\Psi_1 \circ \Pi T\Psi_2\,.
\end{equation}
\end{theorem}
Both sides of~\eqref{eq.pitpsi1psi2} are even thick morphisms $\Pi TM_1\tto \Pi TM_3$.

Proofs of Theorem~\ref{thm.pitasfunctev} and Theorem~\ref{thm.pitasfunctod} are similar to that of Theorem~\ref{thm.tasfunct}.

\subsection{Thick morphisms of vector bundles}\label{subsec.fiberthick}

In this subsection, we somewhat digress from our main subject and introduce a general notion of a ``vector bundle thick morphism'' modelling it on the particular examples we have encountered in this paper. More detailed analysis is given in our joint paper in preparation~\cite{tv:comor}.

In the classical setup, the tangent map (for a given smooth map) is a morphism of vector bundles over the given map of bases. As we have seen above, the analog of that in microformal geometry is not   straightforward. If we want to generalize, we observe, taking into account that thick morphisms are not maps of sets, that it is not so obvious  what    \emph{fiberwise}   should mean for thick morphisms of fiber bundles. Even in the example for the tangent morphism, where the bases are related by an ordinary map, 
we do not arrive at what one would  imagine, i.e. a family of thick morphisms of the fibers of  the form $E_{1x}\tto E_{2\f(x)}$. So much for a ``fiberwise'' condition;  the next question is how to approach   fiberwise \emph{linearity}  in the case of vector bundles. The same example of tangent morphisms shows that it is possible to have linearity    respected in a certain particular sense, in spite of the corresponding pullback's being non-linear.

Below we put forward a definition that may look   arbitrary, but, as we show, possesses   desired properties (maybe in a modified form). We consider the case of even thick morphisms. The case of odd thick morphisms is similar.

Let $E_1$ and $E_2$ be vector bundles over   bases $M_1$ and $M_2$. (As usual, everything can be in the category of supermanifolds.) Denote local coordinates on $M_1$ and $M_2$ by $x^a$ and $y^i$, respectively. Denote linear coordinates in the fibers of $E_1$ and $E_2$, by $u^{\a}$ and $w^{\mu}$. The corresponding conjugate momenta on $T^*E_1$ and $T^*E_2$ will be denoted by $p_a$, $p_{\a}$, $q_{i}$ and $q_{\mu}$.

\begin{definition} \label{def.fiblingen}
A thick morphism $\F\co E_1\tto E_2$ is \emph{fiberwise-linear} if it is specified by a generating function of the form\footnote{For clarity though at the expense of beauty, we are putting here variables with indices as arguments.}
\begin{equation}\label{eq.fiblingen}
  S(x^a,u^{\a};q_i,q_{\mu})= S^i(x^a,q_{\mu})q_i + u^{\a}S_{\a}(x^a,q_{\mu})\,,
\end{equation}
where $S_{\a}(x^a,0)=0$.
\end{definition}

The coefficients $S^i(x^a,q_{\mu})$ and $S_{\a}(x^a,q_{\mu})$ are formal power series in $q_{\mu}$\,:
\begin{equation}\label{eq.si}
  S^i(x^a,q_{\mu})= S^i(x)+ S^{i\mu}(x)q_{\mu}+ \frac{1}{2}\,S^{i\mu\nu}(x)q_{\nu}q_{\mu}+\ldots
\end{equation}
and
\begin{equation}\label{eq.salpha}
  S_{\a}(x^a,q_{\mu})=S_{\a}^{\mu}(x)q_{\mu}+ \frac{1}{2}\,S_{\a}^{\mu\nu}(x)q_{\nu}q_{\mu} +\ldots
\end{equation}

\begin{example} \label{ex.usual}
Set
\begin{equation}\label{eq.fiblinmap}
  S(x^a,u^{\a};q_i,q_{\mu})= \f^i(x)q_i + u^{\a}\F_{\a}^{\mu}(x)q_{\mu}\,,
\end{equation}
so $S^i(x^a,q_{\mu})=S^i(x)\equiv\f^i(x)$ and $S_{\a}(x^a,q_{\mu})=S_{\a}^{\mu}(x)q_{\mu} \equiv \F_{\a}^{\mu}(x)q_{\mu}$.
This corresponds to an ordinary fiberwise map $\F\co E_1\to E_2$ linear on each fiber, over a map $\f\co M_1\to M_2$,    given by the formulas $y^i=\f^i(x)$ and $w^{\mu}=u^{\a}\F_{\a}^{\mu}(x)$.
\end{example}

A general fiberwise-linear thick morphism given by~\eqref{eq.fiblingen} can be seen a `thickening' of a usual fiberwise-linear map of vector bundles $E_1\to E_2$ over a map $M_1\to M_2$, as in Example~\ref{ex.usual}, where  $\f^i(x)\equiv S^i(x)$ of \eqref{eq.si} and $\F_{\a}^{\mu}(x)\equiv S_{\a}^{\mu}(x)$ of \eqref{eq.salpha}.

\begin{example} Various versions of the tangent morphisms considered above,   $T\F$, $T\Psi$, $\Pi T\F$, and  $\Pi T\Psi$, all have the generating functions of
the form~\eqref{eq.fiblingen} or its odd analog. In particular, for $T\F$ we had the generating function~\eqref{eq.dotsdetail}, i.e.
\begin{equation}
  \dot S(x,\dot x; \dot q, q)=  F^i(x,q) \dot q_i + \dot x^a  E_a(x,q)
\end{equation}
 where
\begin{equation*}
  F^i(x,q)=\f^i(x)+S^{ij}(x)q_j+\frac{1}{2}\,S^{ijk}(x)q_kq_j+\ldots
\end{equation*}
and
\begin{equation*}
  E_a(x,q)=\p_a\f^i(x)q_i+\frac{1}{2}\,\p_aS^{ij}(x)q_jq_i+ \ldots
\end{equation*}
Here   $\dot x^a$ play the role of $u^{\a}$ and $\dot y^i$ play the role of $w^{\mu}$, and recall that here $\dot q_i$ are conjugate to $y^i$, so play the role of $q_i$ in~\eqref{eq.fiblingen}, while $q_i$ are conjugate to $\dot y^i$, so play the role of $q_{\mu}$.
\end{example}

\begin{example}[special case of Definition~\ref{def.fiblingen}]
\label{def.fiblin}
Suppose a thick morphism $\F\co E_1\tto E_2$ has a generating function of the form
\begin{equation}\label{eq.fiblin}
  S(x^a,u^{\a};q_i,q_{\mu})= \f^i(x)q_i + u^{\a}S_{\a}(x^a,q_{\mu})\,.
\end{equation}
The difference with the general case~\eqref{eq.fiblingen} is that in~\eqref{eq.fiblin}, $S^i(x^a,q_{\mu})\equiv \f^i(x)$ do  not depend on the momentum variables $q_{\mu}$. One can see that here the thick morphism $\F$ reduces to a family of thick morphisms  $\F_x\co E_{1x}\tto E_{2\f(x)}$ of the fibers, where $\f\co M_1\to M_2$ is a map specified by $y^i=\f^i(x)$. (The precise  meaning is that such is the action of $\F$ on functions.)
\end{example}

Note that the tangent morphisms do  \emph{not} fall under Example~\ref{def.fiblin}.

First we need to know that the definition of fiberwise-linear morphisms does not depend on   choices of coordinates and that such morphisms are composable.

\begin{theorem}\label{thm.flm}
  The class of fiberwise-linear thick morphisms of  vector bundles is well-defined, i.e.  generating functions of the form~\eqref{eq.fiblingen}  are invariant under changes of coordinates in vector bundles.  It is closed under composition of thick morphisms.
\end{theorem}
(Transformation law for generating functions of thick morphisms is given in~\cite{tv:microformal}.)

We omit the proof of Theorem~\ref{thm.flm} (see~\cite{tv:comor}), but we will give  a proof of the following theorem.

\begin{theorem}\label{thm.fl}
  Let $\F\co E_1\tto E_2$ be a fiberwise-linear thick morphism of vector bundles with a generating function~\eqref{eq.fiblingen}. Then:

  (1)  the diagram
\begin{equation}\label{eq.diagflm}
  \begin{CD} E_1 & \ttto{\F} & E_2 \\
    @V{\pi_1}VV   @VV{\pi_2}V  \\
    M_1   @>>{\f_0}>   M_2\,.
    \end{CD}
\end{equation}
is commutative, where the $\f_0\co M_1\to M_2$ is given by $y^i=\f^i(x)\equiv S^i(x)$  from~\eqref{eq.si}; and

(2) for a  function of the form $g(y,w)=g_0(y)+ w^{\mu}g_{\mu}(y)$ on $E_2$ (``fiberwise-affine''), its pullback by $\F$ will be a function on $E_1$ of the same form:
  \begin{equation}\label{eq.phistaraffine}
    \F^*[g]=f_0(x) + u^{\a}f_{\a}(x)= g_0(y) + u^{\a}S_{\a}\bigl(x;g_{\mu}(y)\bigr)\,,
  \end{equation}
where in~\eqref{eq.phistaraffine}, $y$ is the function of $x$ found from the equations
\begin{equation}\label{eq.fg}
  y^i=S^i(x;g_{\mu}(y))\,.
\end{equation}
In particular, for functions on the base, $g=g(y)$, there will be $\F^[g]=\f_0^*(g)$; and for fiberwise-linear functions, $g=w^{\mu}g_{\mu}(y)$, there will be  $\F^[g]=u^{\a}S_{\a}\bigl(x;g_{\mu}(y)\bigr)$, where $y$ is found from~\eqref{eq.fg}.
\end{theorem}
\begin{proof}
To prove part (1) of the theorem, consider the generating functions for all the arrows in diagram~\eqref{eq.diagflm}. For the top, we have~\eqref{eq.fiblingen}. For the bottom, it is $S_{\f_0}(x;q_i)=\f^i(x)q_i$. For the vertical arrows, we have $S_{\pi_1}(x^a,u^{\a};\bar p_a)=x^a\bar p_a$ and $S_{\pi_2}(y^i,w^{\mu};\bar q_i)=y^i\bar q_i$. Therefore, we obtain the following for the compositions:
\begin{multline*}
  S_{\pi_2\circ \F}(x^a,u^{\a};q_i)= S(x^a,u^{\a};\bar q_i,\bar q_{\mu}) + S_{\pi_2}(\bar y^i,\bar w^{\mu};  q_i) -\bar y^i\bar q_i- \bar w^{\mu}\bar q_{\mu}= \\
  S^i(x^a,\bar q_{\mu})\bar q_i + u^{\a}S_{\a}(x^a,\bar q_{\mu}) + \bar y^i q_i
  -\bar y^i\bar q_i- \bar w^{\mu}\bar q_{\mu}\,,
\end{multline*}
where $\bar q_i$, $\bar q_{\mu}$, $\bar y^i$, and $\bar w^{\mu}$ are determined from the equations
\begin{align*}
    \bar q_i & = \der{}{\bar y^i}S_{\pi_2}(\bar y^i,\bar w^{\mu};  q_i)\equiv q_i\,,  \\
   \bar q_{\mu} & = \der{}{\bar w^{\mu}}S_{\pi_2}(\bar y^i,\bar w^{\mu};  q_i)\equiv 0\,,  \\
   \bar y^i & =(-1)^{\itt}\der{}{\bar q_i}S(x^a,u^{\a};\bar q_i,\bar q_{\mu}) \equiv S^i(x^a,\bar q_{\mu})\,, \\
   \bar w^{\mu} & = (-1)^{\tilde\mu} \der{}{\bar q_{\mu}}S(x^a,u^{\a};\bar q_i,\bar q_{\mu})\,,
 \end{align*}
hence we have
\begin{equation*}
  S_{\pi_2\circ \F}(x^a,u^{\a};q_i)= S^i(x^a,0)  q_i + u^{\a}S_{\a}(x^a,0)= S^i(x)q_i +0=\f^i(x)q_i\,,
\end{equation*}
from~\eqref{eq.si} and \eqref{eq.salpha}, and similarly
\begin{equation*}
  S_{\f_0\circ \pi_1}(x^a,u^{\a};q_i)=   S_{\pi_1}(x^a,u^{\a};  \bar p_a) + S_{\f_0}(\bar x^a; q_i)- \bar x^a\bar p_a= x^a\bar p_a+ \f^i(\bar x)q_i - \bar x^a\bar p_a\,,
\end{equation*}
where $\bar x^a$ and $\bar p_a$ are determined from
\begin{align*}
    \bar x^a & =(-1)^{\at}\der{}{\bar p_a}S_{\pi_1}(x^a,u^{\a};  \bar p_a)\equiv x^a\,,\\
  \bar p_a & = \der{}{\bar x^a}S_{\f_0}(\bar x^a; q_i)\,,
\end{align*}
hence
\begin{equation*}
  S_{\f_0\circ \pi_1}(x^a,u^{\a};q_i) = \f^i(x)q_i\equiv S_{\pi_2\circ \F}(x^a,u^{\a};q_i)\,,
\end{equation*}
which proves the commutativity of~\eqref{eq.diagflm}. (Note that we substantially relied on the condition $S_{\a}(x^a,0)=0$, which is part of Definition~\ref{def.fiblingen}; otherwise   in the final formula for $S_{\pi_2\circ \F}(x^a,u^{\a};q_i)$ there would be an extra term  obstructing the commutativity.)

To prove part (2), consider an even function on $E_2$ of the form $g(y^i,w^{\mu})=g_0(y)+w^{\mu}g_{\mu}(y)$. Its fullback by $\F$ will be the function $f(x^a,u^{\a})$ on $E_1$ given by
\begin{multline*}
  f(x^a,u^{\a})=  S(x^a,u^{\a};\bar q_i,\bar q_{\mu}) + g(\bar y^i,\bar w^{\mu}) -\bar y^i\bar q_i- \bar w^{\mu}\bar q_{\mu}=\\
  S^i(x^a,\bar q_{\mu})\bar q_i + u^{\a}S_{\a}(x^a,\bar q_{\mu}) + g_0(\bar y)+\bar w^{\mu}g_{\mu}(\bar y)
  -\bar y^i\bar q_i- \bar w^{\mu}\bar q_{\mu}\,,
\end{multline*}
where $\bar q_i$, $\bar q_{\mu}$, $\bar y^i$, and $\bar w^{\mu}$ are determined from
\begin{align*}
    \bar q_i & = \der{}{\bar y^i}g(\bar y^i,\bar w^{\mu})\,,  \\
   \bar q_{\mu} & = \der{}{\bar w^{\mu}}g(\bar y^i,\bar w^{\mu})\equiv g_{\mu}(\bar y)\,,  \\
   \bar y^i & =(-1)^{\itt}\der{}{\bar q_i}S(x^a,u^{\a};\bar q_i,\bar q_{\mu}) \equiv S^i(x^a,\bar q_{\mu})\,, \\
   \bar w^{\mu} & = (-1)^{\tilde\mu} \der{}{\bar q_{\mu}}S(x^a,u^{\a};\bar q_i,\bar q_{\mu})\,,
 \end{align*}
hence we obtain that, for our $g$,
\begin{equation*}
  \F^*[g]=f(x,u)= u^{\a}S_{\a}(x^a,  q_{\mu}) + g_0(y)
\end{equation*}
(we were able to get rid of bars over the variables, now redundant)
where $q_{\mu}=g_{\mu}(y)$ and $y$ is determined from the equations $y^i=S^i(x; g_{\mu}(y))$. This is what is claimed in~\eqref{eq.phistaraffine} and \eqref{eq.fg}. In particular, when $g_{\mu}(y)\equiv 0$, i.e. we have a function on the base $g=g_0(y)$, then $y$ is obtained explicitly as $y=S^i(x,0)=\f^i(x)$, hence $\F^*[g]=\F^*[g_0]$ is the ordinary pullback $\f_0^*(g_0)$. (This also follows from part (1).) On the other hand, when $g_0(x)\equiv 0$, so we have a fiberwise-linear function on $E_2$, $g=w^{\mu}g_{\mu}(y)$, we arrive at the function
\begin{equation*}
  \F[w^{\mu}g_{\mu}(y)]=u^{\a}f_{\a}(x)\equiv u^{\a}S_{\a}(x;g_{\mu}(y))\,,
\end{equation*}
where $y$ is given as the solution of $y^i=S^i(x;g_{\mu}(y))$, i.e. at a fiberwise-linear function on $E_1$, as claimed  (whose coefficients $f_{\a}$   depend  non-linearly on the coefficients $g_{\mu}$ of $g$).
\end{proof}

Theorem~\ref{thm.fl} justifies the name ``fiberwise-linear'' for the class of thick morphisms of vector bundles introduced in Definition~\ref{def.fiblingen}: the corresponding pullbacks map functions on the base   to functions on the base  and fiberwise-linear functions    to fiberwise-linear functions. (The latter, in spite of the non-linearity of the pullbacks as such.)

\begin{remark} If for a vector bundle $E_1$, we introduce weights, i.e. a $\ZZ$-grading, by setting $\w(x^a)=0$ for base coordinates and $\w(u^{\a})=+1$ for fiber coordinates, and do the same for $E_2$, and consider the induced grading for the cotangent bundles, i.e. require that the weights of the canonically conjugate variables be  opposite (e.g. $\w(p_a)=-\w(x^a)=0$ and $\w(p_{\a})=-\w(u^{\a})=-1$), then one can characterize generating functions of the form~\eqref{eq.fiblingen} by the condition
\begin{equation}\label{eq.wdegs}
  (\w +\deg)(S)=+1\,,
\end{equation}
where $\deg=\#p_a+\#p_{\a}+\#q_{i}+\#q_{\mu}$ is the degree in the momentum variables. Indeed, for such ``total weight''  $\w +\deg$ we have
\begin{equation*}
  \w +\deg=(\#u^{\a}-\#p_{\a}+\#w^{\mu}-\#q_{\mu})+  (\#p_a+\#p_{\a}+\#q_{i}+\#q_{\mu})= \#p_a+\#u^{\a}+\#q_{i}+\#w^{\mu}\,,
\end{equation*}
and clearly a function $S(x^a,u^{\a};q_i,q_{\mu})$ satisfies~\eqref{eq.wdegs} if and only if has the form~\eqref{eq.fiblingen}.
\end{remark}

\begin{remark} Above, we have considered the case of \emph{even} fiberwise-linear thick morphisms of vector bundles. \emph{Odd}
fiberwise-linear thick morphisms are defined in the same way, and Theorems~\ref{thm.flm} and~\ref{thm.fl} hold mutatis mutandis.
\end{remark}

\section{Thick $Q$-morphisms. Consequences for   forms and cohomology} \label{sec.forms}

In the previous section, we have established that any thick morphism of (super)manifolds, even $M_1\tto M_2$ or odd $M_1\oto M_2$, induces a thick morphism of the antitangent bundles, of the opposite parity: $\Pi TM_1\oto \Pi TM_2$ and $\Pi TM_1\tto \Pi TM_2$, respectively. Since functions on $\Pi TM$ are the same as differential forms on $M$ when $M$ is an ordinary manifold and for a supermanifold, functions on $\Pi TM$ are by definition the Bernstein--Leites  pseudodifferential forms on $M$. 
De Rham differential commutes with ordinary pullbacks of forms (induced by usual smooth maps). What about pullbacks by thick morphisms? In this section, we find the answer. The problem we have to deal with, is the non-linearity of pullbacks. It is useful to look at the case of general $Q$-manifolds.

\subsection{Thick morphisms of $Q$-manifolds} \label{subsec.thickqman}
Recall that a \emph{$Q$-manifold} is a supermanifold equipped with a homological vector field, i.e. an odd vector field $Q$ satisfying $Q^2=0$. (This notion is due to A.~S.~Schwarz~\cite{schwarz:semiclassical}.\footnote{$Q$-manifolds are sometimes referred to as ``DG-manifolds'', but the definition does  not require a $\ZZ$-grading and interesting examples can have none or several, so we prefer the original  terminology.} See also~\cite{schwarz:aksz}) In particular, the algebra  of functions on a $Q$-manifold is a chain complex. (Here by a ``chain complex'' we mean just a super vector space equipped with an odd differential.)

A \emph{morphism of $Q$-manifolds} or, shortly, a \emph{$Q$-morphism}, is a smooth supermanifold map that intertwines the homological vector fields. (As it is known, such are the supergeometric descriptions of the $\Linf$-morphisms of $\Linf$-algebras and the Lie algebroid morphisms.) We shall extend this notion to thick morphisms.

Suppose $M_1$ and $M_2$ are $Q$-manifolds with the homological vector fields $Q_1$ and $Q_2$. In coordinates, $Q_1=Q^a(x)\lder{}{x^a}$ and $Q_1=Q^i(y)\lder{}{y^i}$.

\begin{definition} \label{def.qthick}
An even thick morphism $\F\co M_1\tto M_2$ is a \emph{thick $Q$-morphism} if the Hamiltonians corresponding to $Q_1$ and $Q_2$ are $\F$-related, i.e. are equal on the Lagrangian submanifold in $T^*M_2\times (-T^*M_1)$ given by $\F$. In coordinates,
\begin{equation}\label{eq.qev}
  Q_1^a(x^a)\der{S}{x^a}=Q_2^i\Bigl((-1)^{\itt}\der{S}{q_i}\Bigr)q_i\,,
\end{equation}
where $S=S(x^a;q_i)$ is the (even) generating function of $\F$.

Likewise, an odd thick morphism $\Psi\co M_1\oto M_2$ is a \emph{thick $Q$-morphism} if the multivector fields corresponding to $Q_1$ and $Q_2$ are $\Psi$-related, i.e. are equal on the Lagrangian submanifold in $\Pi T^*M_2\times (-\Pi T^*M_1)$  given by $\Psi$. In coordinates,
\begin{equation}\label{eq.qod}
  Q_1^a(x^a)\der{S}{x^a}=Q_2^i\Bigl(\der{S}{y^*_i}\Bigr)y^*_i\,,
\end{equation}
where $S=S(x^a;y^*_i)$ is the (odd) generating function of $\Psi$.
\end{definition}

Definition~\ref{def.qthick} can be seen as a special case of the notions of $\Sinf$- and $\Pinf$-thick morphisms introduced in~\cite{tv:nonlinearpullback} if one notes that a $Q$-structure can be regarded both as a $\Sinf$-structure and a $\Pinf$-structure with a single odd unary bracket given by $Q$ as an operator on functions.

For an ordinary $Q$-morphism $\f\co M_1\to M_2$,    the pullback $\f^*\co \fun(M_2)\to \fun(M_1)$   is by the definition a chain map (w.r.t. the differentials $Q_1$ and $Q_2$). What about pullbacks by thick $Q$-morphisms? Anticipating the answer, since these pullbacks are in general non-linear, we need to explain what would be a ``non-linear chain map'' of complexes. Actually, this is nothing but an $\Linf$-morphism of complexes, which can be elaborated as follows.

Let $V$ and $W$ be complexes, i.e. (super) vector spaces equipped with odd operators $d_1$ and $d_2$ of square zero. We can consider them as supermanifolds 
and the linear operators $d_1$ and $d_2$, as   vector fields, which will be homological. By   parity reversion, the same works for $\Pi V$ and $\Pi W$ (with the operators $d_1^{\Pi}$ and $d_2^{\Pi}$, which for simplicity of notation, we will denote simply by $d_1$ and $d_2$).

\begin{definition} \label{def.nlchain}
A \emph{non-linear chain map} $f\co V\to W$ is a formal map  $f\co  {V}\to  {W}$, with $V$ and $W$ regarded as supermanifolds\footnote{with zero  as a  base point}, which is a $Q$-morphism w.r.t. $d_1$ and $d_2$.
\end{definition}

In the same way we can speak about non-linear chain maps $f\co \Pi V\to \Pi W$.

(With an abuse of language, we can refer to the corresponding formal supermanifold maps $f\co  {V}\to  {W}$ or $f\co \Pi V\to \Pi W$ also as non-linear chain maps.)

If one expands a non-linear chain map $f\co V\to W$ into the Taylor series, its terms will give even symmetric multilinear maps of super vector spaces
\begin{equation*}
  f_k\co V\times \ldots \times V\to W\,,
\end{equation*}
$k=0,1,2,\ldots $ (equivalently, even linear maps $S^kV\to W$) satisfying the   relations
\begin{align*}
  d(f_0) & =0\,, \\
  d(f_1(v)) & =f_1(d(v))\,,\\
  d(f_2(v_1,v_2)) & =f_2(d(v_1),v_2)+(-1)^{\vt_1}f_2(v_1,d(v_2))\,,\\
  d(f_3(v_1,v_2,v_3)) & =f_3(d(v_1),v_2,v_3)+(-1)^{\vt_1}f_3(v_1,d(v_2),v_3)+(-1)^{\vt_1+\vt_2}f_3(v_1,v_2,d(v_3))\,,\\
  \dots
\end{align*}
where we have put $d$ for both  $d_1$ and $d_2$; i.e. the element $f_0\in W$ is closed, the linear map $f_1\co V\to W$ is a usual chain map, and the Leibniz rule is satisfied for each 
$f_k$, $k\geq 2$.
These are just the relations for an $\Linf$-morphism specialized for our case (see e.g.~\cite{tv:gradedmicro}).

In the same way, a non-linear chain map $f\co \Pi V\to \Pi W$ expands into the Taylor terms, equivalent to a sequence of antisymmetric multilinear maps from $V$ to $W$ of alternating parities (or  linear maps $\Lambda^kV\to W$ of parities $k-1$) satisfying  relations similar to   above.

The chain map $f_1\co V\to W$ induces as usual an  even  linear map  $f_{1*}\co H(V,d_1)\to H(W,d_2)$. Together with it, a standard argument based on the Leibniz rule applied to ``higher'' $f_k$ gives the following statement:
\begin{proposition}
A non-linear chain map $f\co V\to W$ induces a sequence of even symmetric multilinear maps of cohomology
\begin{equation*}
  f_{k*}\co H(V,d_1)\times \ldots \times H(V,d_1)\to H(W,d_2)\,,
\end{equation*}
$k=0,1,2,\ldots $, which can be assembled into a single formal map
\begin{equation}
  f_{*}\co H(V,d_1)\to H(W,d_2)\,,
\end{equation}
of the cohomology spaces regarded as supermanifolds, where $0$ is mapped to $[f_0]\in H(W,d_2)$.
\end{proposition}

Similar statement holds for non-linear chain maps $\Pi V\to \Pi W$.

Now we can return to thick $Q$-morphisms.

\begin{theorem}
  Let $\F\co M_1\tto M_2$ be an even thick $Q$-morphism. Then the pullback
  \begin{equation*}
    \F^*\co \funn(M_2)\to \funn(M_1)
  \end{equation*}
  is a non-linear chain map.

  Similarly, let $\Psi\co M_1\oto M_2$ be an odd  thick $Q$-morphism. Then the pullback
  \begin{equation*}
    \Psi^*\co \ofunn(M_2)\to \ofunn(M_1)
  \end{equation*}
is a non-linear chain map.
\end{theorem}
\begin{proof}
  Consider the case of an even thick morphism $\F\co M_1\tto M_2$. (The odd case can be proved in the same way.) Suppose the generating function of $\F$ satisfies~\eqref{eq.qev}. We need to show that the pullback $\F^*\co \funn(M_2)\to \funn(M_1)$ is a non-linear chain map with respect to the $Q_2$ and $Q_1$ as the differentials. According to Definition~\ref{def.nlchain} this would mean the pullback $\F^*$ intertwining the corresponding linear vector fields $\hat Q_2\in \Vect(\funn(M_2))$ and $\hat Q_1\in \Vect(\funn(M_1))$ on the spaces of functions. This is a special case of the statement for $\Sinf$-structure (see~Theorem~6 in~\cite{tv:nonlinearpullback} or  Theorem~4.4 in~\cite{tv:gradedmicro}). For completeness, we provide the argument, which is as follows. We need to show that
  \begin{equation}\label{eq.inttw}
    (\id +\e \hat Q_1)\circ \F^*=\F^*\circ (\id +\e \hat Q_2)
  \end{equation}
  where $\e^2=0$. Note that, for $Q_1$, $(\id +\e \hat Q_1)f(x)=f(x)+\e Q^a(x)\der{f}{x^a}(x)$, and the same for $Q_2$. So if we start from some $g\in \funn(M_2)$, we have $\F^*[g]=f$, where
   \begin{equation*}
     f(x)=S(x,q)+g(y)-y^iq_i
   \end{equation*}
   and $y$ and $q$ are defined from the equations
  \begin{align*}
    q_i & =\der{g}{y}(y)\,, \\
    y^i & =(-1)^{\itt}\der{S}{q_i}(x,q)\,.
  \end{align*}
  We have
  \begin{multline*}
    \der{f}{x^a}(x)=\der{S}{x^a}+ \der{q_i}{x^a}\der{S}{q_i}+\der{y^i}{x^a}\der{g}{y^i}-\der{y^i}{x^a}q_i-(-1)^{\itt}\der{q_i}{x^a}y^i=
    \der{S}{x^a}\Bigl(x,\der{g}{y}(y)\Bigl)\,,
  \end{multline*}
  where $y$ is found from
  \begin{equation}\label{eq.phig}
    y^i=(-1)^{\itt}\der{S}{q_i}\Bigl(x,\der{g}{y}(y)\Bigl)\,.
  \end{equation}
  Hence
  \begin{equation}\label{eq.comp1}
    \bigl((\id +\e \hat Q_1)\circ \F^*\bigr)[g]= f(x)+\e Q^a(x)\der{S}{x^a}\Bigl(x,\der{g}{y}(y)\Bigl)\,.
  \end{equation}
  On the other hand, we know that the derivative of $\F^*$ at $g\in \funn(M_2)$ is the usual pullback $\f^*_g$, i.e. the substitution   $y=y(x)$ defined by~\eqref{eq.phig} (by Theorem~2 in~\cite{tv:nonlinearpullback}). Hence
  \begin{equation}\label{eq.comp2}
    \bigl(\F^*\circ (\id +\e \hat Q_2)\bigr)[g]=\F^*\bigl[g+\e \hat Q_2 g\bigr]=\F^*[g]+\e \f^*_g\bigl(\hat Q_2 g\bigr)
    =f(x)+\e Q^i(y)\der{g}{y^i}(y)\,,
  \end{equation}
  where $y$ is determined from~\eqref{eq.phig}. From~\eqref{eq.qev} and~\eqref{eq.phig}, we obtain
  \begin{equation*}
    Q^a(x)\der{S}{x^a}\Bigl(x,\der{g}{y}(y)\Bigl)=Q^i(y)\der{g}{y^i}(y)\,,
  \end{equation*}
  hence the right-hand sides of~\eqref{eq.comp1} and~\eqref{eq.comp2} coincide, and we have the equality~\eqref{eq.inttw} as claimed.
\end{proof}

\begin{corollary}
An even thick $Q$-morphism $\F\co M_1\tto M_2$ induces a non-linear formal map on cohomology
\begin{equation}
  \F^*\co H(\funn(M_2),Q_2)\to H(\funn(M_1),Q_1)\,
\end{equation}
regarded as supermanifolds. Similarly, an odd
thick $Q$-morphism $\Psi\co M_1\oto M_2$ induces a non-linear formal map on cohomology with reversed parity
\begin{equation}
  \F^*\co \Pi H(\funn(M_2),Q_2)\to \Pi H(\funn(M_1),Q_1)\,
\end{equation}
regarded as supermanifolds.
\end{corollary}

\subsection{Action of thick morphisms on forms and de Rham cohomology} \label{subsec.thickderham}

Let us return to the setup of Section~\ref{sec.tang}. Consider supermanifolds $M_1$ and $M_2$ and a thick morphism $\F\co M_1\tto M_2$. (The case of odd thick morphisms $M_1\oto M_2$ can be treated similarly.)

From Section~\ref{sec.tang}, we know that there is the induced antitangent odd thick morphism
\begin{equation}\label{eq.atan}
  \Pi T\F\co \Pi TM_1\oto \Pi TM_2
\end{equation}
and hence we have the non-linear pullback on odd forms
\begin{equation}\label{eq.phistarforms}
  \F^*\co \pomm(M_2)\to \pomm(M_1)\,.
\end{equation}
Here by ``forms'' we mean pseudodifferential forms and use the notation $\O(M):=\fun(\Pi TM)$). We use boldface to denote the corresponding infinite-dimensional supermanifolds, such as $\omm(M)$ (whose points are the even forms) and $\pomm(M)$ (whose points are the odd forms). In~\eqref{eq.phistarforms}, we write  $\F^*$ for $(\Pi T\F)^*$, the pullback by~\eqref{eq.atan}.

\begin{theorem}
  The antitangent   morphism~\eqref{eq.atan} is a thick $Q$-morphism (with respect to the de Rham differentials).
\end{theorem}
\begin{proof}
Let   $S=S(x;q)$ be the generating function for $\F$.
In the description of $\Pi T\F$ we use the identification
\begin{equation*}
  \Pi T^*(\Pi TM)\cong \Pi T(T^*M)
\end{equation*}
established in Section~\ref{sec.tang}. We  change back the notation  and use $d$  instead of $\p$  because here we will not need forms on the antitangent bundles. In order to prove that $\Pi T\F$ is a thick $Q$-morphism, we need to show that multivector fields corresponding to   $d$ on $M_1$ and   $M_2$,  i.e. $dx^ax^*_a$ and $dy^iy^*_i$, are equal on $\Pi T\F$ (see~\eqref{eq.qod} for the abstract case). Taking into account formulas~\eqref{eq.oddtau3}, we need to show that
\begin{equation}\label{eq.d}
  dx^a(-1)^{\at+1}dp_a=dy^i(-1)^{\itt+1}dq_i\,,
\end{equation}
if we have the equations for $p_a$ and $y^i$\,:
\begin{align*}
  p_a & =\der{S}{x^a}\,, \\
  y^i & =(-1)^{\itt}\der{S}{q_i}
\end{align*}
and the corresponding equations for $dp_a$ and $dy^i$ obtained by applying $d$\,:
\begin{align*}
  dp_a & =dx^b\dder{S}{x^b}{x^a} + dq_i\dder{S}{q_i}{x^a}\,,\\
  dy^i & =(-1)^{\itt}\left(dx^a\dder{S}{x^a}{q_i} +dq_j\dder{S}{q_j}{q_i}\right)\,.
\end{align*}
Hence we have for the left-hand side of~\eqref{eq.d}\,:
\begin{equation}\label{eq.dxdp}
  (-1)^{\at+1}dx^adp_a=(-1)^{\at+1}dx^adx^b\dder{S}{x^b}{x^a}+(-1)^{\at+1}dx^adq_i\dder{S}{q_i}{x^a}
\end{equation}
and for the right-hand side of~\eqref{eq.d}\,:
\begin{equation}\label{eq.dydq}
  (-1)^{\itt+1}dy^idq_i=-dx^a\dder{S}{x^a}{q_i}dq_i -dq_j\dder{S}{q_j}{q_i}dq_i\,.
\end{equation}
The terms with $dx^adx^b$  and with $dq_idq_j$ in~\eqref{eq.dxdp} and~\eqref{eq.dydq}  identically vanish and we can see that  the remaining terms with $dx^adq_i$ are equal. Hence~\eqref{eq.d} holds, and the theorem is proved.
\end{proof}

\begin{corollary}
The pullback on odd forms~\eqref{eq.phistarforms} is a non-linear chain map, inducing a non-linear formal map
\begin{equation}\label{eq.phistarcoh}
  \Psi^*\co \PH^*(M_2)\to \PH^*(M_1)\,.
\end{equation}
\end{corollary}
Here we have used boldface to emphasize that we consider vector supermanifolds corresponding to the de Rham cohomology spaces. (Recall that the cohomology of pseudodifferential forms is isomorphic to the ordinary cohomology of the underlying topological space, see e.g.~\cite{tv:gitnew}.)

Mutatis mutandis, the same holds for  an odd thick morphism $\Psi\co M_1\oto M_2$. The antitangent morphism
\begin{equation}\label{eq.aotan}
  \Pi T\Psi\co \Pi TM_1\tto \Pi TM_2
\end{equation}
will be a thick $Q$-morphism, and hence the pullback on even forms,
\begin{equation}\label{eq.psistarforms}
  \Psi^*\co \omm(M_2)\to \omm(M_1)\,,
\end{equation}
will be a non-linear chain map, inducing a non-linear formal map of de Rham cohomology:
\begin{equation}\label{eq.psistarcoh}
  \Psi^*\co \HH^*(M_2)\to \HH^*(M_1)\,.
\end{equation}

Further work is required to see if the higher terms of the maps~\eqref{eq.phistarcoh} and~\eqref{eq.psistarcoh} can be non-trivial, and  explore significance of such non-linear cohomology transformations.




\begin{thebibliography}{10}

\bibitem{schwarz:aksz}
M.~Alexandrov, M.~Kontsevich, A.~Schwarz, and O.~Zaboronsky.
\newblock The geometry of the master equation and topological quantum field
  theory.
\newblock {\em Intern. J. of Mod. Phys.}, 12(7):1405--1429, 1997.

\bibitem{tv:comor}
S. Brady and Th. Voronov.
\newblock Morphisms, comorphisms and duality for {$Q$-manifolds} and
  {$L_{\infty}$-algebroids}.
\newblock  {(In preparation)}.

\bibitem{tv:thickspinor}
H.~M.~Khudaverdian and Th.~Th. Voronov.
\newblock Thick morphisms of supermanifolds, quantum mechanics, and spinor
  representation.
\newblock {\em J. Geom. Phys.}, 148:103540, 14, 2020.

\bibitem{mackenzie:diffeomorphisms}
K.~C.~H. Mackenzie.
\newblock On certain canonical diffeomorphisms in symplectic and {P}oisson
  geometry.
\newblock In {\em Quantization, Poisson brackets and beyond (Manchester,
  2001)}, volume 315 of {\em Contemp. Math.}, pages 187--198. Amer. Math. Soc.,
  Providence, RI, 2002.

\bibitem{mackenzie:book2005}
K.~C.~H. Mackenzie.
\newblock {\em General theory of {L}ie groupoids and {L}ie algebroids}, volume
  213 of {\em London Mathematical Society Lecture Note Series}.
\newblock Cambridge University Press, Cambridge, 2005.

\bibitem{mackenzie:bialg}
K.~C.~H. Mackenzie  and P. Xu.
\newblock Lie bialgebroids and {P}oisson groupoids.
\newblock {\em Duke Math. J.}, 73(2):415--452, 1994.

\bibitem{schwarz:semiclassical}
Albert Schwarz.
\newblock Semiclassical approximation in {B}atalin-{V}ilkovisky formalism.
\newblock {\em Comm. Math. Phys.}, 158(2):373--396, 1993.


\bibitem{shemy:koszul}
E. Shemyakova.
\newblock On a {Batalin-Vilkovisky} operator generating higher {Koszul}
  brackets on differential forms.
\newblock {\em Lett. Math. Phys.}, 111:41, 2021.

\bibitem{tv:operovermap}
E. Shemyakova and Th.~Th.~Voronov.
\newblock On differential operators over a map, thick morphisms of
  supermanifolds, and symplectic micromorphisms.
\newblock {\em Differential Geom. Appl.}, 74: Paper No. 101704, 12, 2021.

\bibitem{tulczyjew:1977}
W.~M. Tulczyjew.
\newblock A symplectic formulation of particle dynamics.
\newblock In {\em Differential Geometric Methods in Mathematical Physics, Bonn
  1975}, volume 570 of {\em Lecture Notes in Math.}, pages 457--463.
  Springer-Verlag, Berlin, 1977.

\bibitem{tv:gitnew}
Th.~Th. Voronov.
\newblock {\em Geometric Integration Theory on Supermanifolds}, volume~3 of
  {\em Classic Reviews in Mathematics and Mathematical Physics}.
\newblock Cambridge Scientific Publishers, second edition, 2014.


\bibitem{tv:graded}
Th.~Th. Voronov.
\newblock Graded manifolds and {Drinfeld} doubles for {Lie} bialgebroids.
\newblock  In {\em Quantization, Poisson brackets and beyond (Manchester,
  2001)}, volume 315 of {\em Contemp. Math.}, pages 131--168. Amer. Math. Soc.,
  Providence, RI, 2002.

\bibitem{tv:qman-mack}
Th.~Th. Voronov.
\newblock {$Q$}-manifolds and {Mackenzie} theory.
\newblock {\em Comm. Math. Phys.}, 315(2):279--310, 2012.

\bibitem{tv:nonlinearpullback}
Th.~Th. Voronov.
\newblock ``{N}onlinear pullbacks" of functions and {$L_{\infty}$-morphisms}
  for homotopy {P}oisson structures.
\newblock {\em J. Geom. Phys.}, 111:94--110, 2017.

\bibitem{tv:oscil}
Th.~Th. Voronov.
\newblock Thick morphisms of supermanifolds and oscillatory integral operators.
\newblock {\em Russian Math. Surveys}, 71(4):784--786, 2016.

\bibitem{tv:qumicro}
Th.~Th. Voronov.
\newblock Quantum microformal morphisms of supermanifolds: an explicit formula
  and further properties.
\newblock \texttt{arXiv:1512.04163 [math-ph]}.

\bibitem{tv:tangmicro}
Th.~Th. Voronov.
\newblock Tangent functor on microformal morphisms.
\newblock \texttt{arXiv:1710.04335 [math.DG]}.


\bibitem{tv:microformal}
Th.~Th. Voronov.
\newblock Microformal geometry and homotopy algebras.
\newblock {\em Proc. Steklov Inst. Math.}, 302:88--129, 2018.



\bibitem{tv:gradedmicro}
Th.~Th. Voronov.
\newblock Graded geometry, {$Q$}-manifolds, and microformal geometry.
\newblock {\em Fortschritte der Physik}, 67:1910023.

\end{thebibliography}

\def\cprime{$'$}

\end{document}